\theoremstyle{plain}
\newtheorem{thm}{Theorem}[section]
\newtheorem{prop}[thm]{Proposition}
\newtheorem{cor}[thm]{Corollary}
\theoremstyle{definition}
\newtheorem{rk}[thm]{Remark}
\def\:{\colon}
\def\rom#1{\emph{#1}}
\def\({\rom(}
\def\){\rom)}
\def\dl{\delta}
\def\e{\varepsilon}
\def\dis{\operatorname{dis}}
\def\codis{\operatorname{codis}}
\def\diam{\operatorname{diam}}
\def\gr{\operatorname{Gr}}
\def\cl#1{\overline {#1}}
\begin{document}
\title{Surjective Mappings in the Hyers--Ulam Theorem and the Gromov--Hausdorff Distance\footnote{The work of A.A. Tuzhilin was supported by grant No. 25-21-00152 of the Russian Science Foundation, by National Key R\&D Program of China (Grant No. 2020YFE0204200), as well as by the Sino-Russian Mathematical Center at Peking University. Partial of the work by A.A. Tuzhilin were done in Sino-Russian Math. center, and he thanks the Math. Center for the invitation and the hospitality.}}
\author{S.\,A.~Bogaty\u{\i}, E.\,A.~Reznichenko, A.\,A.~Tuzhilin}
\date{}
\maketitle
\tableofcontents

\begin{abstract}
A topological space is said to be cardinality homogeneous if every nonempty open subset has the same cardinality as the space itself. Let $X$ and $Y$ be cardinality homogeneous metric spaces of the same cardinality. If there exists a $\dl$-surjective $d$-isometry between such equicardinal cardinality homogeneous metric spaces $X$ and $Y$, then there exists a bijective $(d+2\dl)$-isometry between $X$ and $Y$. This result allows us to reduce the Dilworth--Tabor theorem to the Gevirtz--Omladi\v{c}--\v{S}emrl theorem on approximation by isometries and, in particular, to questions concerning the isometry of Banach spaces.

\textbf{Keywords}: metric space, Gromov--Hausdorff distance, $\dl$-surjective mapping, $d$-isometry, isometry.
\end{abstract}


\section{Introduction}
The celebrated Mazur--Ulam theorem of 1932 states that \emph{every surjective isometry $f\colon V\to W$ between real normed spaces is affine}~\cite{MU32}. One direction for strengthening this theorem arose from the 1945 work of Hyers and Ulam~\cite{HU45}. D.\,H.~Hyers and S.\,M.~Ulam posed the following question:

\emph{Is there a constant $K$, depending only on the Banach spaces $V$ and $W$, such that for every $d>0$ and every surjective $d$-isometry $f\colon V\to W$ there exists a surjective isometry $U\colon V\to W$ satisfying}
$$
\|f - U\| = \sup\bigl\{\|f(x)-U(x)\| : x\in V\bigr\} \le K\,d?
$$
They noted that \emph{the surjectivity assumption for $f$ is essential} and provided an affirmative answer in the case $V=W=\ell_2$, the Hilbert space (with $K=10$).

The efforts of numerous mathematicians over many years culminated in a complete affirmative resolution by J.~Gevirtz in 1983~\cite{G83}. In 1995, M.~Omladi\v{c} and P.~\v{S}emrl~\cite{OS95} established the sharp constant $K=2$.

S.\,J.~Dilworth~\cite{D99} and J.~Tabor~\cite{T00} relaxed the surjectivity condition:

\emph{If $d,\dl \ge 0$ and $f\colon V\to W$ is a $d$-isometry between Banach spaces with $f(0)=0$ that maps $V$ $\dl$-surjectively onto a closed subspace $L\subset W$ \(i.e., $d_H(L,f(V))\le\dl$\), then there exists a bijective linear isometry $U\colon V\to L$ such that}
\begin{equation}\label{eq:d1}
\|f-U\| \le M=12d+5\dl \quad \text{(Dilworth)}; \qquad M=2d+35\dl \quad \text{($L=W$, Tabor)}.
\end{equation}
In 2003, P.~\v{S}emrl and J.~V\"{a}is\"{a}l\"{a}~\cite{SV03} showed that \emph{the right-hand side of $(\ref{eq:d1})$ can be replaced by $M=2d+2\dl$}. They also proved that \emph{when $L=W$, the universal bound $M=2d$ holds}.

Inspired by the proof technique of the Cantor--Bernstein theorem, the present paper approximates two mappings $f\colon X\to Y$ and $g\colon Y\to X$ between sets by a bijection (Theorem~\ref{thm:1-1exist}).

Two applications of this approximation are presented. First, we show that for cardinality homogeneous metric spaces of the same cardinality, the Gromov--Hausdorff distance can be characterised using bijective mappings (Theorem~\ref{cor:dis}).

Theorem~\ref{thm:1-1exist} and its corollaries also permit a reduction of the approximation problem for $d$-isometric $\dl$-surjective mappings of Banach spaces (onto a closed linear subspace) to the approximation of $(d+2\dl)$-isometric (or $(d+6\dl)$-isometric) surjective mappings (see the general Theorems~\ref{thm:sur} and~\ref{thm:sur2} and Propositions~\ref{prop:bsh} and~\ref{prop:bshL} for Banach spaces). This yields a simpler reduction, in our view, of the Dilworth--Tabor theorem to the theorem of J.~Gevirtz (see Corollaries~\ref{cor:Tabor} and~\ref{cor:bshL}). A further application of the \v{S}emrl--V\"{a}is\"{a}l\"{a} theorem shows that the coefficient $d+2\dl$ arising in this reduction is non-essential.

\section{Preliminaries and Notation}
Let $(X,d)$ be a metric space (distances may be infinite). For $\e>0$, $x,y\in X$, and $M\subset X$, we denote by
\begin{align*}
|xy| &= d(x,y) && \textit{the distance between $x$ and $y$}; \\
U_\e(x) &= \{y\in X : |xy| < \e\} && \textit{the open $\e$-ball centered at $x$}; \\
U_\e(M) &= \bigcup_{x\in M} U_\e(x) && \textit{the $\e$-neighborhood of $M$}; \\
\diam M &= \sup\{|xy| : x,y\in M\} && \textit{the diameter of $M$}.
\end{align*}
For subsets $A,B\subset X$, the \emph{Hausdorff distance\/} is
$$
d_H(A,B) = \inf\{\e>0 : A\subset U_\e(B)\ \text{and}\ B\subset U_\e(A)\}.
$$
The \emph{Gromov--Hausdorff distance $d_{GH}(X,Y)$} between metric spaces $X$ and $Y$ is the infimum of $d_H(X,Y)$ over all isometric embeddings of $X$ and $Y$ into a common metric space $Z$; see~\cite{E75,G81,G99} for details.

A \emph{correspondence\/} between sets $X$ and $Y$ is a relation $R\subset X\times Y$ such that every $x\in X$ is related to at least one $y\in Y$, and vice versa.

For nonempty $R\subset X\times Y$, its \emph{distortion\/} is
$$
\dis R = \sup\bigl\{ \bigl||xx'|-|yy'|\bigr| : (x,y),(x',y')\in R \bigr\}.
$$
An equivalent expression for the Gromov--Hausdorff distance is~\cite[Theorem 7.3.25]{BBI}
$$
d_{GH}(X,Y) = \frac{1}{2} \inf\{\dis R : R\subset X\times Y\ \text{is a correspondence}\}.
$$
The \emph{co-distortion\/} of nonempty $R,R'\subset X\times Y$ is
$$
\codis(R,R') = \sup\bigl\{ \bigl||xx'|-|yy'|\bigr| : (x,y)\in R,\ (x',y')\in R' \bigr\}.
$$
For mappings $f\colon X\to Y$ and $g\colon Y\to X$, the \emph{distortion\/} of $f$ is
$$
\dis f = \sup\bigl\{ \bigl||f(x)f(x')|-|xx'|\bigr| : x,x'\in X \bigr\},
$$
and the \emph{co-distortion\/} is
$$
\codis(f,g) = \sup\bigl\{ \bigl||xg(y)| - |f(x)y|\bigr| : x\in X,\ y\in Y \bigr\}.
$$
The \emph{graph\/}\footnote{In set theory, the graph of a mapping is identified with the mapping itself. We use $\gr f$ for clarity.} of $f$ is
$$
\gr f = \{(x,f(x)) : x\in X\}.
$$
For a relation $Q\subset Y\times X$, we denote $Q^{-1}=\{(x,y)\in X\times Y: (y,x)\in Q\}$ the {\em converse relation}. We denote
\[
\gr^{-1} g = (\gr g)^{-1} = \{ (g(y),y) : y\in Y \}.
\]
The following facts are easily verified:
\begin{itemize}
\item $R\subset X\times Y$ is a correspondence if and only if there exist $f\in Y^X$ and $g\in X^Y$ with $\gr f \cup \gr^{-1} g \subset R$;
\item $\dis f = \dis(\gr f)$;
\item $\codis(f,g) = \codis(\gr f, \gr^{-1} g)$;
\item $\dis(R\cup R') = \max\{\dis R, \codis(R,R'), \dis R'\}$.
\end{itemize}
Thus,
$$
d_{GH}(X,Y) = \inf\{ d_{GH}(f,g) : f\in Y^X,\ g\in X^Y\},
$$
where
$$
d_{GH}(f,g) = \frac{1}{2} \max\{\dis f, \codis(f,g), \dis g\}.
$$
This form is noted in~\cite[Proposition 1(5), Remark 2]{MS2005} for bounded spaces.

The \emph{density\/} of a topological space $X$ is the cardinal
$$
d(X) = \min\{|D| : D\subset X = \cl D\}.
$$

\section{Approximation of Mappings by a Bijection}\label{sec:amb}
\begin{thm}
\label{thm:1-1exist}
Let $X$ and $Y$ be sets equipped with covers $\mathcal{L}_X$ and $\mathcal{L}_Y$ such that $|X|=|Y|=|V|$ for every $V\in\mathcal{L}_X$ and $|U|$ for every $U\in\mathcal{L}_Y$. Then for any mappings $f\colon X\to Y$ and $g\colon Y\to X$, there exists a bijection $\tilde{f}\colon X\to Y$ such that for every $x\in X$ at least one of the following holds\/\rom:
\begin{enumerate}
\item $\tilde{f}(x)$ and $f(x)$ lie in the same member of $\mathcal{L}_Y$\rom;
\item $g(\tilde{f}(x))$ and $x$ lie in the same member of $\mathcal{L}_X$.
\end{enumerate}
\end{thm}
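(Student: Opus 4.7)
My approach is a transfinite back-and-forth construction, in the spirit of the Cantor--Bernstein proof but exploiting the size of the cover members in place of any injectivity of $f$ or $g$. If $|X|$ is finite then every $V\in\mathcal{L}_X$ with $|V|=|X|$ equals $X$, and similarly every $U\in\mathcal{L}_Y$ equals $Y$, so conditions (1) and (2) hold trivially for any bijection $\tif\colon X\to Y$ (which exists since $|X|=|Y|$). Assume from now on that $\kappa:=|X|=|Y|$ is infinite, and well-order $X=\{x_\a\}_{\a<\kappa}$, $Y=\{y_\a\}_{\a<\kappa}$.

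I would build an increasing chain $\{\tif_\xi\}_{\xi\le\kappa}$ of partial injective maps from $X$ to $Y$ by transfinite induction, taking unions at limit ordinals. At a successor stage $\a+1$ I perform two substeps:
\begin{itemize}
\item \emph{Type A (grow the domain).} If $x_\a$ is not already in the domain, choose $U\in\mathcal{L}_Y$ containing $f(x_\a)$, pick any $y\in U$ not yet in the range, and set $\tif(x_\a):=y$.
\item \emph{Type B (grow the range).} If $y_\a$ is not already in the range, choose $V\in\mathcal{L}_X$ containing $g(y_\a)$, pick any $x\in V$ not yet in the domain, and set $\tif(x):=y_\a$.
\end{itemize}
Each substep is executable because $|U|=|V|=\kappa$ while at the corresponding stage the current domain and range have cardinality at most $2|\a|<\kappa$, so the sets $U$ minus the current range and $V$ minus the current domain are nonempty.

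After $\kappa$ stages every $x_\a$ has been placed in the domain (by its own Type A substep at the latest) and every $y_\a$ in the range, so the union $\tif$ is a bijection $X\to Y$. For each $x\in X$ its pair $(x,\tif(x))$ was introduced at a single stage: if via Type A then $\tif(x)\in U\ni f(x)$, which is condition~(1); if via Type B then $x\in V\ni g(\tif(x))$, which is condition~(2). There is no real obstacle; the only point worth double-checking is the cardinality bookkeeping at limit stages, and this is automatic because $\kappa$ is infinite and at most one pair is added per substep.
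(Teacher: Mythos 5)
Your proposal is correct and is essentially the paper's own argument: the paper also runs a transfinite back-and-forth over a common enumeration of $X$ and $Y$, at each stage either sending $x_\beta$ to a fresh point of a cover member containing $f(x_\beta)$ (giving condition (1)) or matching $y_\beta$ with a fresh point of a cover member containing $g(y_\beta)$ (giving condition (2)), the only cosmetic difference being that the paper phrases the construction as a pair of mutually inverse injections $f_\alpha$, $g_\alpha$ while you grow a single partial bijection from both sides. Your cardinality bookkeeping ($<\kappa$ points used at each stage versus $|U|=|V|=\kappa$) and your separate treatment of the finite case are both sound.
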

\begin{proof}
Choose functions $U\colon X\to\mathcal{L}_X$ and $V\colon Y\to\mathcal{L}_Y$ with $x\in U(x)$ and $y\in V(y)$. Enumerate the points using the cardinal $\tau = |X|=|Y|$: $X = \{x_\alpha : \alpha < \tau\}$ and $Y = \{y_\alpha : \alpha < \tau\}$. For $\alpha \le \tau$, set $X_\alpha = \{x_\beta : \beta < \alpha\}$ and $Y_\alpha = \{y_\beta : \beta < \alpha\}$.

By transfinite induction on $\alpha \le \tau$, construct injective mappings $f_\alpha\colon X_\alpha \to Y$ and $g_\alpha\colon Y_\alpha \to X$ satisfying, for every $\beta < \alpha$:
\begin{itemize}
\item $f_\alpha|_{X_\beta} = f_\beta$ and $g_\alpha|_{Y_\beta} = g_\beta$;
\item if $f_\alpha(x_\beta) \in Y_\alpha$, then $x_\beta = g_\alpha(f_\alpha(x_\beta))$;
\item if $g_\alpha(y_\beta) \in X_\alpha$, then $y_\beta = f_\alpha(g_\alpha(y_\beta))$;
\item either $f_\alpha(x_\beta) \in V(f(x_\beta))$ or $g_\alpha(y_\beta) \in U(g(y_\beta))$.
\end{itemize}

If $\alpha$ is limit, take unions of the previous mappings. For successor $\alpha = \beta + 1$, extend $f_\alpha$ and $g_\alpha$ as follows.

To define $f_\alpha(x_\beta)$:
\begin{itemize}
\item[Case 1:] If $x_\beta \notin g_\beta(Y_\beta)$, choose $y \in V(f(x_\beta)) \setminus (Y_\beta \cup f_\beta(X_\beta))$ and set $f_\alpha(x_\beta) = y$.
\item[Case 2:] If $x_\beta \in g_\beta(Y_\beta)$, set $f_\alpha(x_\beta) = y$ where $y \in Y_\beta$ and $x_\beta = g_\beta(y)$.
\end{itemize}

To define $g_\alpha(y_\beta)$:
\begin{itemize}
\item[Case 1:] If $y_\beta \notin f_\alpha(X_\alpha)$, choose $x \in U(g(y_\beta)) \setminus (X_\alpha \cup g_\beta(Y_\beta))$ and set $g_\alpha(y_\beta) = x$.
\item[Case 2:] If $y_\beta \in f_\alpha(X_\alpha)$, set $g_\alpha(y_\beta) = x$ where $x \in X_\alpha$ and $y_\beta = f_\alpha(x)$.
\end{itemize}

The construction yields mutually inverse bijections $\tilde{f} = f_\tau$ and $g_\tau$ satisfying the required conditions for all $x\in X$.
\end{proof}
We write $x \in X_I$ if condition (1) holds for $x$, and $x \in X_{II}$ if (2) holds.

\section{Pseudometrics on Classes of Metric Spaces}\label{sec:pmcms}
Let metrics be defined on $X$ and $Y$, with $f\colon X\to Y$ and $g\colon Y\to X$. Define
\begin{align*}
d_f^- &= \sup\{|xx'| - |f(x)f(x')| : x,x'\in X\}, & d_f^+ &= \sup\{|f(x)f(x')| - |xx'| : x,x'\in X\}, \\
d_{(f,g)}^- &= \sup\{|x g(y)| - |f(x) y| : x\in X, y\in Y\}, & d_{(f,g)}^+ &= \sup\{|f(x) y| - |x g(y)| : x\in X, y\in Y\}.
\end{align*}
Then
\begin{gather}
-d_f^- \le |f(x)f(x')| - |xx'| \le d_f^+ \quad \text{for all $x,x'\in X$},\label{eq:d2}\\
-d_g^- \le |g(y)g(y')| - |yy'| \le d_g^+ \quad \text{for all $y,y'\in Y$},\label{eq:d3}\\
-d_{(f,g)}^- \le |f(x) y| - |x g(y)| \le d_{(f,g)}^+ \quad \text{for all $x\in X$, $y\in Y$}.\label{eq:d4}
\end{gather}
Clearly,
$$
d_{(g,f)}^+ = d_{(f,g)}^- \quad \text{and} \quad d_{(g,f)}^- = d_{(f,g)}^+.
$$
We have
\begin{gather*}
\dis f = \max\{d_f^+, d_f^-\}, \quad \codis(f,g) = \max\{d_{(f,g)}^+, d_{(f,g)}^-\}, \\
d_{GH}(f,g) = \frac{1}{2} \max\{d_f^-, d_f^+, d_g^-, d_g^+, d_{(f,g)}^+, d_{(f,g)}^-\}.
\end{gather*}
A mapping $f$ is a $d$-isometry if $\dis f \le d$. One-sided bounds are also useful, e.g., $d_f^+$-non-expansive mappings~\cite{V17}.

Define
$$
md_{GH}(f,g) = \frac{1}{2} \max\{\dis f, \dis g\} = \frac{1}{2} \max\{d_f^-, d_f^+, d_g^-, d_g^+\}.
$$
Evidently, $md_{GH}(f,g) \le d_{GH}(f,g)$.

For a class $\mathcal{F} \subset Y^X \times X^Y$, set
\begin{align*}
d_{GH}(X,Y;\mathcal{F}) &= \inf\{d_{GH}(f,g) : (f,g)\in\mathcal{F}\}, \\
md_{GH}(X,Y;\mathcal{F}) &= \inf\{md_{GH}(f,g) : (f,g)\in\mathcal{F}\}.
\end{align*}
\begin{prop}\label{prop:dgh-leq}
If $\mathcal{F}' \subset \mathcal{F}$, then
\begin{align*}
md_{GH}(X,Y;\mathcal{F}) &\le d_{GH}(X,Y;\mathcal{F}) \le d_{GH}(X,Y;\mathcal{F}'), \\
md_{GH}(X,Y;\mathcal{F}) &\le md_{GH}(X,Y;\mathcal{F}').
\end{align*}
\end{prop}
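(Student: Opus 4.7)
The plan is to observe that all three claimed inequalities follow by combining two elementary facts: monotonicity of the infimum under set inclusion, and the pointwise inequality $md_{GH}(f,g)\le d_{GH}(f,g)$ that is already recorded in the paragraph preceding the proposition. No new construction is required; the proof is purely bookkeeping.

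First I would handle the two right-hand inequalities, both of which are instances of the same monotonicity principle: if $A\subset B$ are nonempty subsets of $\R\cup\{+\infty\}$, then $\inf B\le\inf A$. Applying this with $A=\{d_{GH}(f,g):(f,g)\in\mathcal{F}'\}$ and $B=\{d_{GH}(f,g):(f,g)\in\mathcal{F}\}$ gives $d_{GH}(X,Y;\mathcal{F})\le d_{GH}(X,Y;\mathcal{F}')$, and the same argument with $md_{GH}$ in place of $d_{GH}$ gives $md_{GH}(X,Y;\mathcal{F})\le md_{GH}(X,Y;\mathcal{F}')$.

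Next I would verify the remaining inequality $md_{GH}(X,Y;\mathcal{F})\le d_{GH}(X,Y;\mathcal{F})$. For every $(f,g)\in\mathcal{F}$, the definitions give
$$
md_{GH}(f,g)=\tfrac12\max\{\dis f,\dis g\}\le \tfrac12\max\{\dis f,\codis(f,g),\dis g\}=d_{GH}(f,g),
$$
so taking the infimum of both sides over $(f,g)\in\mathcal{F}$ yields $md_{GH}(X,Y;\mathcal{F})\le d_{GH}(X,Y;\mathcal{F})$, completing the chain.

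There is no real obstacle here; the only thing to be careful about is the (harmless) edge case in which $\mathcal{F}'$ or $\mathcal{F}$ is empty, which one either excludes by convention (as is implicit in the formulas on p.~preceding) or handles with the convention $\inf\emptyset=+\infty$, under which all the inequalities remain trivially true.
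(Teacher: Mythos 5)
Your argument is correct and is exactly the routine one the paper has in mind: the paper states Proposition~\ref{prop:dgh-leq} without proof, since it follows immediately from monotonicity of the infimum under inclusion together with the pointwise inequality $md_{GH}(f,g)\le d_{GH}(f,g)$ already noted in the text. Nothing further is needed.
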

The standard Gromov--Hausdorff distance is
$$
d_{GH}(X,Y) = d_{GH}(X,Y; Y^X \times X^Y).
$$
The modified Gromov--Hausdorff distance~\cite{M12}, which is algorithmically simpler, is
\begin{equation}\label{eq:d6}
md_{GH}(X,Y) = md_{GH}(X,Y; Y^X \times X^Y).
\end{equation}
Define
\begin{align*}
\mathcal{M}_c(X,Y) &= \{f\in Y^X : f\ \text{ is continuous}\}, \\
\mathcal{F}_c(X,Y) &= \mathcal{M}_c(X,Y) \times \mathcal{M}_c(Y,X).
\end{align*}
The \emph{continuous Gromov--Hausdorff distance}~\cite{LMS23, BT26} is
\begin{equation}\label{eq:d5}
d_{GH}^c(X,Y) = d_{GH}(X,Y; \mathcal{F}_c(X,Y)),
\end{equation}
and its \emph{modified version\/} is
$$
md_{GH}^c(X,Y) = md_{GH}(X,Y; \mathcal{F}_c(X,Y)).
$$
From Proposition~\ref{prop:dgh-leq} we obtain
\begin{align*}
md_{GH} \le d_{GH} \le d_{GH}^c, \qquad md_{GH} \le md_{GH}^c.
\end{align*}

\subsection{Distance Between Spaces of the Same Cardinality}\label{ssec:peq}
Let $X$ and $Y$ be metric spaces of the same cardinality. Define
\begin{align*}
\mathcal{M}_{\mathrm{in}}(X,Y) &= \{f\colon X\to Y : f\ \text{is injective}\}, &
\mathcal{M}_{\mathrm{sur}}(X,Y) &= \{f\colon X\to Y : f\ \text{is surjective}\}, \\
\mathcal{M}_{\mathrm{bi}}(X,Y) &= \{f\colon X\to Y : f\ \text{is bijective}\}, \\
\mathcal{F}_{\mathrm{in}}(X,Y) &= \mathcal{M}_{\mathrm{in}}(X,Y) \times \mathcal{M}_{\mathrm{in}}(Y,X), &
\mathcal{F}_{\mathrm{sur}}(X,Y) &= \mathcal{M}_{\mathrm{sur}}(X,Y) \times \mathcal{M}_{\mathrm{sur}}(Y,X), \\
\mathcal{F}_{\mathrm{bi}}(X,Y) &= \mathcal{M}_{\mathrm{bi}}(X,Y) \times \mathcal{M}_{\mathrm{bi}}(Y,X), &
\mathcal{F}_i(X,Y) &= \{(f,f^{-1}) : f\in\mathcal{M}_{\mathrm{bi}}(X,Y)\}.
\end{align*}
The corresponding (modified) distances are
\begin{align}\label{eq:d7}
(m)d_{GH}^{\mathrm{sur}}(X,Y) &= (m)d_{GH}(X,Y; \mathcal{F}_{\mathrm{sur}}(X,Y)), \notag \\
(m)d_{GH}^{\mathrm{in}}(X,Y) &= (m)d_{GH}(X,Y; \mathcal{F}_{\mathrm{in}}(X,Y)), \notag \\
(m)d_{GH}^{\mathrm{bi}}(X,Y) &= (m)d_{GH}(X,Y; \mathcal{F}_{\mathrm{bi}}(X,Y)).
\end{align}
Also set
$$
(m)d_{GH}^i(X,Y) = (m)d_{GH}(X,Y; \mathcal{F}_i(X,Y)).
$$
From Proposition~\ref{prop:dgh-leq},
$$
md_{GH}^{\mathbf{xx}} \le d_{GH}^{\mathbf{xx}} \quad \text{for $\mathbf{xx} \in \{i,\mathrm{in},\mathrm{sur},\mathrm{bi}\}$}
$$
and
\begin{align*}
(m)d_{GH} &\le (m)d_{GH}^{\mathrm{in}} \le (m)d_{GH}^{\mathrm{bi}} \le (m)d_{GH}^i, \\
(m)d_{GH} &\le (m)d_{GH}^{\mathrm{sur}} \le (m)d_{GH}^{\mathrm{bi}}.
\end{align*}
\begin{prop}\label{prop:1-1equal}
For a bijection $f\colon X\to Y$ between metric spaces,
$$
d_{f^{-1}}^+ = d_f^-, \quad d_{f^{-1}}^- = d_f^+; \quad d_{(f,f^{-1})}^+ = d_f^+, \quad d_{(f,f^{-1})}^- = d_f^-.
$$
Consequently,
$$
md_{GH}(f,f^{-1}) = d_{GH}(f,f^{-1}) = \frac{1}{2} \dis f = \frac{1}{2} \dis f^{-1}.
$$
\end{prop}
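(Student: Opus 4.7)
The plan is straightforward: all four identities follow from a change-of-variables argument that uses only the fact that $f$ is a bijection, and then the ``consequently'' clause falls out by plugging into the already-established formulas for $\dis$, $\codis$, $md_{GH}$ and $d_{GH}$.

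First I would handle the two distortion identities. Starting from
\[
d_{f^{-1}}^+ = \sup\bigl\{|f^{-1}(y)f^{-1}(y')| - |yy'| : y,y'\in Y\bigr\},
\]
since $f$ is a bijection, the substitution $x=f^{-1}(y)$, $x'=f^{-1}(y')$ gives a bijection between the sets over which the two suprema are taken, yielding $d_{f^{-1}}^+=\sup\{|xx'|-|f(x)f(x')|:x,x'\in X\}=d_f^-$. The identity $d_{f^{-1}}^-=d_f^+$ is symmetric. For the co-distortion, I would expand
\[
d_{(f,f^{-1})}^+ = \sup\bigl\{|f(x)\,y|-|x\,f^{-1}(y)|:x\in X,\,y\in Y\bigr\},
\]
and use the substitution $y=f(x')$ (so $f^{-1}(y)=x'$), which ranges bijectively over $Y$. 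This turns the supremum into $\sup\{|f(x)f(x')|-|xx'|:x,x'\in X\}=d_f^+$. Likewise $d_{(f,f^{-1})}^-=d_f^-$.

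Next I would combine these with the identities already recorded in the excerpt. From $\dis f=\max\{d_f^+,d_f^-\}$ we immediately get $\dis f^{-1}=\max\{d_{f^{-1}}^+,d_{f^{-1}}^-\}=\max\{d_f^-,d_f^+\}=\dis f$, and similarly $\codis(f,f^{-1})=\max\{d_{(f,f^{-1})}^+,d_{(f,f^{-1})}^-\}=\max\{d_f^+,d_f^-\}=\dis f$. Plugging these into
\[
md_{GH}(f,f^{-1})=\tfrac12\max\{\dis f,\dis f^{-1}\},\qquad d_{GH}(f,f^{-1})=\tfrac12\max\{\dis f,\codis(f,f^{-1}),\dis f^{-1}\},
\]
collapses both right-hand sides to $\tfrac12\dis f=\tfrac12\dis f^{-1}$.

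There is no real obstacle here. The only point that requires even a moment's thought is verifying that the substitutions truly trace out the full index sets of the suprema, which is exactly where bijectivity of $f$ is used; without this hypothesis (say if $f$ were merely injective), the equalities would degenerate to one-sided inequalities. So the proof is essentially a bookkeeping exercise after observing that bijectivity lets one freely re-parameterise pairs in $X^2$ and $Y^2$ through $f$ and $f^{-1}$.
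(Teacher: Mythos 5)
Your proof is correct and follows essentially the same route as the paper: the paper's argument also consists of substituting $x=f^{-1}(y)$, $x'=f^{-1}(y')$ into the defining inequalities for $d_f^{\pm}$ and $y=f(x')$ into those for $d_{(f,g)}^{\pm}$, and then reading off the ``consequently'' clause from the formulas $\dis f=\max\{d_f^+,d_f^-\}$, $\codis(f,g)=\max\{d_{(f,g)}^+,d_{(f,g)}^-\}$ and the definitions of $md_{GH}$ and $d_{GH}$. Your write-up merely spells out the bookkeeping that the paper leaves implicit.
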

\begin{proof}
Substitute $x = f^{-1}(y)$ and $x' = f^{-1}(y')$ into (\ref{eq:d2}). The remaining equalities follow from substituting $y = f(x')$ into (\ref{eq:d4}).
\end{proof}
\begin{cor}\label{cor:ghi}
For metric spaces $X$ and $Y$ of the same cardinality,
\begin{align*}
md_{GH}^i(X,Y) = d_{GH}^i(X,Y) &= \frac{1}{2} \inf\{\dis f : f\in\mathcal{M}_{\mathrm{bi}}(X,Y)\} \\
&= \frac{1}{2} \inf\{\dis g : g\in\mathcal{M}_{\mathrm{bi}}(Y,X)\}.
\end{align*}
\end{cor}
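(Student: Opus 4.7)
The plan is to unpack the definitions of $d_{GH}^i$ and $md_{GH}^i$ and then apply Proposition~\ref{prop:1-1equal} termwise to every pair in $\mathcal{F}_i(X,Y)$. By the definition of $\mathcal{F}_i(X,Y)$, each of its elements has the form $(f,f^{-1})$ for some $f\in\mathcal{M}_{\mathrm{bi}}(X,Y)$, and every such pair belongs to $\mathcal{F}_i(X,Y)$. So
$$
d_{GH}^i(X,Y) = \inf\{d_{GH}(f,f^{-1}) : f\in\mathcal{M}_{\mathrm{bi}}(X,Y)\},
$$
and the analogous identity holds for $md_{GH}^i$.

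Next I would apply Proposition~\ref{prop:1-1equal} to each term of these infima. That proposition gives, for any bijection $f\colon X\to Y$, the chain of equalities $md_{GH}(f,f^{-1}) = d_{GH}(f,f^{-1}) = \tfrac12\dis f = \tfrac12\dis f^{-1}$. Taking the infimum over $f\in\mathcal{M}_{\mathrm{bi}}(X,Y)$ then immediately yields
$$
md_{GH}^i(X,Y) = d_{GH}^i(X,Y) = \tfrac12\inf\{\dis f : f\in\mathcal{M}_{\mathrm{bi}}(X,Y)\},
$$
which is the first part of the corollary.

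For the final equality, I would observe that the inversion map $f\mapsto f^{-1}$ is a bijection from $\mathcal{M}_{\mathrm{bi}}(X,Y)$ onto $\mathcal{M}_{\mathrm{bi}}(Y,X)$; moreover Proposition~\ref{prop:1-1equal} gives $\dis f = \max\{d_f^+, d_f^-\} = \max\{d_{f^{-1}}^-, d_{f^{-1}}^+\} = \dis f^{-1}$, so this reparametrisation preserves distortion. Consequently the infimum of $\dis f$ over $\mathcal{M}_{\mathrm{bi}}(X,Y)$ equals the infimum of $\dis g$ over $\mathcal{M}_{\mathrm{bi}}(Y,X)$, which closes the chain. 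I do not expect any difficulty here; the statement is essentially a direct corollary of Proposition~\ref{prop:1-1equal}, and the only thing to check carefully is that $\mathcal{F}_i$ was indeed set up so as to be parametrised by a single bijection (rather than an independent pair), so that the termwise simplification is legitimate.
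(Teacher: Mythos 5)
Your proposal is correct and follows exactly the route the paper intends: the corollary is an immediate consequence of Proposition~\ref{prop:1-1equal}, obtained by taking the infimum over the pairs $(f,f^{-1})$ parametrising $\mathcal{F}_i(X,Y)$, together with the observation that inversion identifies $\mathcal{M}_{\mathrm{bi}}(X,Y)$ with $\mathcal{M}_{\mathrm{bi}}(Y,X)$ while preserving distortion. Nothing is missing; your care about $\mathcal{F}_i$ being parametrised by a single bijection is precisely the point that makes the termwise simplification legitimate.
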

\begin{prop}\label{prop:dghbi=dghi}
For metric spaces $X$ and $Y$ of the same cardinality,
$$
d_{GH}^{\mathrm{bi}}(X,Y) = md_{GH}^{\mathrm{bi}}(X,Y) = d_{GH}^i(X,Y).
$$
\end{prop}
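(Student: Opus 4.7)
The plan is to prove the chain $md_{GH}^{\mathrm{bi}} \le d_{GH}^{\mathrm{bi}} \le d_{GH}^i \le md_{GH}^{\mathrm{bi}}$, which immediately yields equality throughout. Two of the three inequalities are already available: $md_{GH}^{\mathrm{bi}}(X,Y) \le d_{GH}^{\mathrm{bi}}(X,Y)$ is the $\mathbf{xx}=\mathrm{bi}$ instance from the displayed bound just before the proposition, and $d_{GH}^{\mathrm{bi}}(X,Y) \le d_{GH}^i(X,Y)$ follows from Proposition~\ref{prop:dgh-leq} together with the inclusion $\mathcal{F}_i(X,Y)\subset\mathcal{F}_{\mathrm{bi}}(X,Y)$.

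The substantive inequality is $d_{GH}^i(X,Y) \le md_{GH}^{\mathrm{bi}}(X,Y)$, and this is where Proposition~\ref{prop:1-1equal} does the work. Given any pair $(f,g)\in\mathcal{F}_{\mathrm{bi}}(X,Y)$, the mapping $f$ is itself a bijection, so $(f,f^{-1})\in\mathcal{F}_i(X,Y)$. Applying Proposition~\ref{prop:1-1equal} to $f$,
\[
d_{GH}^i(X,Y) \;\le\; d_{GH}(f,f^{-1}) \;=\; \tfrac{1}{2}\dis f \;\le\; \tfrac{1}{2}\max\{\dis f,\dis g\} \;=\; md_{GH}(f,g).
\]
Taking the infimum on the right over $(f,g)\in\mathcal{F}_{\mathrm{bi}}(X,Y)$ yields $d_{GH}^i(X,Y)\le md_{GH}^{\mathrm{bi}}(X,Y)$, closing the loop.

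There is no real obstacle here; the only conceptual point is that inside $\mathcal{F}_{\mathrm{bi}}(X,Y)$ the two components of a pair need not be mutually inverse, so the $d_{GH}$ of such a pair can in principle be strictly larger than the corresponding $md_{GH}$, but for the modified quantity one may ``discard'' the companion bijection $g$ and pair $f$ with its genuine inverse at no cost, which is exactly why the modified bijective distance collapses onto $d_{GH}^i$.
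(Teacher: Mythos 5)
Your proposal is correct and follows essentially the same route as the paper: both reduce to the single inequality $d_{GH}^i(X,Y)\le md_{GH}^{\mathrm{bi}}(X,Y)$ and both rest on the identification of $d_{GH}^i$ with half the infimal distortion of a single bijection (Proposition~\ref{prop:1-1equal}/Corollary~\ref{cor:ghi}). The only cosmetic difference is that you bound $md_{GH}(f,g)$ from below termwise by pairing $f$ with $f^{-1}$, whereas the paper phrases the same observation as an interchange of the infimum and the maximum over the product class $\mathcal{F}_{\mathrm{bi}}(X,Y)$.
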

\begin{proof}
Since $md_{GH}^{\mathrm{bi}} \le d_{GH}^{\mathrm{bi}} \le d_{GH}^i = md_{GH}^i$, it suffices to prove $d_{GH}^i \le md_{GH}^{\mathrm{bi}}$. From Corollary~\ref{cor:ghi},
\begin{align*}
md_{GH}^{\mathrm{bi}}(X,Y) &= \frac{1}{2} \inf\{\max\{\dis f, \dis g\} : f\in\mathcal{M}_{\mathrm{bi}}(X,Y),\ g\in\mathcal{M}_{\mathrm{bi}}(Y,X)\} \\
&= \frac{1}{2} \max\left\{\inf\{\dis f : f\in\mathcal{M}_{\mathrm{bi}}(X,Y)\},\ \inf\{\dis g : g\in\mathcal{M}_{\mathrm{bi}}(Y,X)\}\right\} \\
&= d_{GH}^i(X,Y).
\end{align*}
\end{proof}

\subsection{Distance Between Cardinality Homogeneous Spaces of the Same Cardinality}
A topological space $X$ is \emph{cardinality homogeneous\/} if every nonempty open subset $U\subset X$ has cardinality $|U| = |X|$.

The following refines Theorem~\ref{thm:1-1exist} for metric spaces.
\begin{prop}\label{prop:dis}
Let $X$, $Y$, $\mathcal{L}_X$, $\mathcal{L}_Y$, $f$, $g$, and $\tilde{f}$ be as in Theorem~$\ref{thm:1-1exist}$. Additionally, let $X$ and $Y$ be metric spaces and $\e_X$, $\e_Y > 0$ finite with $\diam U \le \e_X$ for all $U\in\mathcal{L}_X$ and $\diam V \le \e_Y$ for all $V\in\mathcal{L}_Y$.

For $x,x'\in X$,
$$
|f(x)\tilde{f}(x)| \le
\begin{cases}
\e_Y & \text{if $x\in X_I$}, \\
d_{(f,g)}^+ + \e_X & \text{if $x\in X_{II}$};
\end{cases}
$$
\begin{align*}
-d_f^- - 2\e_Y &\le |\tilde{f}(x)\tilde{f}(x')| - |xx'| \le d_f^+ + 2\e_Y && \text{if $x,x'\in X_I$}, \\
-d_g^+ - 2\e_X &\le |\tilde{f}(x)\tilde{f}(x')| - |xx'| \le d_g^- + 2\e_X && \text{if $x,x'\in X_{II}$}, \\
-d_{(f,g)}^- - \e_X - \e_Y &\le |\tilde{f}(x)\tilde{f}(x')| - |xx'| \le d_{(f,g)}^+ + \e_X + \e_Y && \text{if $x\in X_I$, $x'\in X_{II}$}.
\end{align*}
Consequently, with $\e = \max\{\e_X,\e_Y\}$,
\begin{align*}
-\max\{d_f^-, d_g^+, d_{(f,g)}^-\} - 2\e &\le |\tilde{f}(x)\tilde{f}(x')| - |xx'| \le \max\{d_f^+, d_g^-, d_{(f,g)}^+\} + 2\e, \\
\frac{1}{2} \dis \tilde{f} &\le d_{GH}(f,g) + \e.
\end{align*}
\end{prop}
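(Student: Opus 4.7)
The plan is to unwind the two alternatives of Theorem~\ref{thm:1-1exist} one by one, and in each case estimate $|\tilde f(x)\tilde f(x')|-|xx'|$ by inserting an intermediate point at which we can apply the already-known one-sided bounds $d_f^\pm$, $d_g^\pm$, $d_{(f,g)}^\pm$, paying only the diameter of the relevant cover element.

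First I would settle the pointwise estimate for $|f(x)\tilde f(x)|$. If $x\in X_I$, then $f(x)$ and $\tilde f(x)$ lie in a common $V\in\mathcal{L}_Y$, so the bound $\e_Y$ is just $\diam V$. If $x\in X_{II}$, then $g(\tilde f(x))$ and $x$ lie in a common $U\in\mathcal{L}_X$ of diameter $\le\e_X$; setting $y=\tilde f(x)$ in \eqref{eq:d4} gives $|f(x)\tilde f(x)|\le|x g(\tilde f(x))|+d_{(f,g)}^+\le\e_X+d_{(f,g)}^+$.

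For the two-point estimates I would treat each of the three cases by a single triangle-inequality chain. When $x,x'\in X_I$, go from $\tilde f(x)$ to $f(x)$, to $f(x')$, to $\tilde f(x')$, using the pointwise $X_I$ estimate at the two ends and \eqref{eq:d2} in the middle; this yields $\pm 2\e_Y$ together with $d_f^\pm$. When $x,x'\in X_{II}$, I pass through $g(\tilde f(x))$ and $g(\tilde f(x'))$: the two outer jumps cost $\e_X$ each, and the middle step $|g(\tilde f(x))g(\tilde f(x'))|-|\tilde f(x)\tilde f(x')|$ is controlled by \eqref{eq:d3} applied to $g$ at $y=\tilde f(x)$, $y'=\tilde f(x')$ (the sign flip on $d_g$ comes from the direction of the substitution, which is why $d_g^+$ and $d_g^-$ appear interchanged here). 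For the mixed case $x\in X_I$, $x'\in X_{II}$, I would chain $\tilde f(x)\to f(x)\to\tilde f(x')$, paying $\e_Y$ at the first step and applying \eqref{eq:d4} with the pair $(x,\tilde f(x'))$ for the second, after which the estimate $|x g(\tilde f(x'))|\le|xx'|+\e_X$ introduces the remaining $\e_X$.

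Finally, the consequent inequality for arbitrary $x,x'\in X$ follows by taking the worst of the three bounds, replacing $\e_X$ and $\e_Y$ by $\e=\max\{\e_X,\e_Y\}$ (noting $\e_X+\e_Y\le 2\e$); combining the upper and lower bounds yields $\dis\tilde f\le\max\{d_f^\pm,d_g^\pm,d_{(f,g)}^\pm\}+2\e=2d_{GH}(f,g)+2\e$. The only mildly delicate point is the bookkeeping in the mixed case, where one has to be careful that the codistortion bound is applied with the right argument $(x,\tilde f(x'))$ so that the $X_{II}$ condition at $x'$ converts a $g$-image back to an $X$-distance; everything else is a routine triangle-inequality calculation.
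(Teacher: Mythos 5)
Your proposal is correct and follows essentially the same route as the paper: the pointwise bound on $|f(x)\tilde f(x)|$ in the two cases, then triangle-inequality chains through $f(x),f(x')$ (case $X_I$), through $g\tilde f(x),g\tilde f(x')$ (case $X_{II}$, with the sign flip on $d_g^\pm$), and through $f(x)$ with the codistortion applied to the pair $(x,\tilde f(x'))$ in the mixed case. The bookkeeping you describe, including $\e_X+\e_Y\le 2\e$ in the final step, matches the paper's computation.
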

\begin{proof}
We provide the detailed estimates for each case.

To bound $|f(x)\tilde{f}(x)|$:

If $x\in X_I$, then $f(x)$ and $\tilde{f}(x)$ lie in the same member of $\mathcal{L}_Y$, so $|f(x)\tilde{f}(x)| \le \e_Y$.

If $x\in X_{II}$, let $\tilde{g} = \tilde{f}^{-1}$ and $y = \tilde{f}(x)$. Then
$$
|f(x)y| \le |x g(y)| + d_{(f,g)}^+ \le |x \tilde{g}(y)| + \e_X + d_{(f,g)}^+ = \e_X + d_{(f,g)}^+.
$$

For $x,x'\in X_I$,
$$
|xx'| - d_f^- - 2\e_Y \le |f(x)f(x')| - 2\e_Y \le |\tilde{f}(x)\tilde{f}(x')| \le |f(x)f(x')| + 2\e_Y \le |xx'| + d_f^+ + 2\e_Y.
$$

For $x,x'\in X_{II}$,
\begin{multline*}
|xx'| - 2\e_X - d_g^+ \le |x g\tilde{f}(x)| + |g\tilde{f}(x) g\tilde{f}(x')| + |g\tilde{f}(x') x'| - 2\e_X - d_g^+ \le \\
|g\tilde{f}(x) g\tilde{f}(x')| - d_g^+ \le |\tilde{f}(x)\tilde{f}(x')| \le |g\tilde{f}(x) g\tilde{f}(x')| + d_g^- \le \\
|g\tilde{f}(x) x| + |xx'| + |x' g\tilde{f}(x')| + d_g^- \le |xx'| + 2\e_X + d_g^-.
\end{multline*}

For $x\in X_I$ and $x'\in X_{II}$,
\begin{multline*}
|xx'| - \e_X - d_{(f,g)}^- - \e_Y \le |xx'| - |x' g\tilde{f}(x')| - d_{(f,g)}^- - \e_Y \le \\
|x g\tilde{f}(x')| - d_{(f,g)}^- - \e_Y \le |f(x)\tilde{f}(x')| - |\tilde{f}(x) f(x)| \le \\
|\tilde{f}(x)\tilde{f}(x')| \le |\tilde{f}(x) f(x)| + |f(x)\tilde{f}(x')| \le \e_Y + |x g\tilde{f}(x')| + d_{(f,g)}^+ \le \\
|xx'| + |x' g\tilde{f}(x')| + d_{(f,g)}^+ + \e_Y \le |xx'| + \e_X + d_{(f,g)}^+ + \e_Y.
\end{multline*}
\end{proof}
\begin{thm}\label{cor:dis}
For cardinality homogeneous metric spaces $X$ and $Y$ of the same cardinality,
\begin{align*}
d_{GH}(X,Y) &= d_{GH}^{\mathrm{in}}(X,Y) = d_{GH}^{\mathrm{sur}}(X,Y) = d_{GH}^{\mathrm{bi}}(X,Y) = d_{GH}^i(X,Y) \\
&= \frac{1}{2} \inf\{\dis f : f\colon X\to Y\ \text{is a bijection}\}.
\end{align*}
\end{thm}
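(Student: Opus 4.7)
The plan is to reduce the entire chain of equalities to a single inequality. From Proposition~\ref{prop:dgh-leq} applied to the inclusions $\mathcal{F}_{\mathrm{bi}} \subset \mathcal{F}_{\mathrm{in}}, \mathcal{F}_{\mathrm{sur}} \subset Y^X \times X^Y$, together with Proposition~\ref{prop:dghbi=dghi} and Corollary~\ref{cor:ghi}, we already have
\[
d_{GH}(X,Y) \;\le\; d_{GH}^{\mathrm{in}}(X,Y),\ d_{GH}^{\mathrm{sur}}(X,Y) \;\le\; d_{GH}^{\mathrm{bi}}(X,Y) \;=\; d_{GH}^i(X,Y) \;=\; \tfrac{1}{2}\inf\{\dis f : f\in\mathcal{M}_{\mathrm{bi}}(X,Y)\}.
\]
Hence it suffices to establish the reverse inequality $d_{GH}^i(X,Y) \le d_{GH}(X,Y)$, since all intermediate quantities are squeezed between these two extremes.

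To prove this, I would fix arbitrary $\eta>0$ and $\e>0$, choose $f\in Y^X$ and $g\in X^Y$ with $d_{GH}(f,g) \le d_{GH}(X,Y)+\eta$, and manufacture the covers required by Proposition~\ref{prop:dis} using the cardinality homogeneity hypothesis. Concretely, let $\mathcal{L}_X$ be the family of all open balls of radius $\e/2$ in $X$, and let $\mathcal{L}_Y$ be the analogous family in $Y$. These are covers with $\diam V\le\e$ for every $V\in\mathcal{L}_X$ and $\diam U\le\e$ for every $U\in\mathcal{L}_Y$. Because each such ball is a nonempty open subset and $X$, $Y$ are cardinality homogeneous of the same cardinality, we have $|V|=|X|=|Y|=|U|$, which is exactly the cardinality hypothesis of Theorem~\ref{thm:1-1exist} and Proposition~\ref{prop:dis}.

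Proposition~\ref{prop:dis} then yields a bijection $\tilde f\in\mathcal{M}_{\mathrm{bi}}(X,Y)$ with
\[
\tfrac{1}{2}\dis \tilde f \;\le\; d_{GH}(f,g) + \e \;\le\; d_{GH}(X,Y) + \eta + \e.
\]
Corollary~\ref{cor:ghi} bounds $d_{GH}^i(X,Y)$ from above by the left-hand side, and letting $\eta$ and $\e$ tend to $0$ gives $d_{GH}^i(X,Y)\le d_{GH}(X,Y)$, completing the chain of equalities.

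I do not foresee any serious obstacle: all the analytic work lies in Theorem~\ref{thm:1-1exist} and Proposition~\ref{prop:dis}, both already established. The only delicate point is the verification that cardinality homogeneity produces arbitrarily fine covers whose members all have full cardinality; this is immediate from the definition once one uses $\e/2$-balls. The degenerate situation in which $X$ (equivalently $Y$) is finite forces $X$ to be a singleton (the only finite cardinality homogeneous metric space), and there the theorem is trivial since the unique map is an isometry.
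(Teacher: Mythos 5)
Your proposal is correct and follows essentially the same route as the paper: reduce via Proposition~\ref{prop:dgh-leq}, Corollary~\ref{cor:ghi}, and Proposition~\ref{prop:dghbi=dghi} to the single inequality $d_{GH}^i(X,Y)\le d_{GH}(X,Y)$, then obtain a bijection from Theorem~\ref{thm:1-1exist} via Proposition~\ref{prop:dis} applied to covers by small balls (whose members have full cardinality by cardinality homogeneity) and let the parameters tend to $0$. The extra remarks (radius $\e/2$ versus $\e$, the finite singleton case) are harmless refinements of the same argument.
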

\begin{proof}
It suffices to consider $d_{GH}(X,Y) < \infty$. From the inequalities in Section~\ref{ssec:peq}, Corollary~\ref{cor:ghi}, and Proposition~\ref{prop:dghbi=dghi}, we need only show $d_{GH}^i(X,Y) \le d_{GH}(X,Y)$.

Given $\e > 0$, choose $f\colon X\to Y$ and $g\colon Y\to X$ with $d_{GH}(f,g) < d_{GH}(X,Y) + \e$.

Cover $X$ and $Y$ by open balls of radius $\e$. By Proposition~\ref{prop:dis}, the bijection $\tilde{f}$ from Theorem~\ref{thm:1-1exist} satisfies $\frac{1}{2} \dis \tilde{f} \le d_{GH}(f,g) + \e$. By Corollary~\ref{cor:ghi}, $d_{GH}^i(X,Y) \le \frac{1}{2} \dis \tilde{f} < d_{GH}(X,Y) + \e$. Since $\e > 0$ is arbitrary, the result follows.
\end{proof}

\section{Approximation of $\dl$-Surjective Mappings}
A mapping $f\colon X\to Y$ is \emph{$\dl$-surjective\/} if $d_H(f(X),Y) < \dl$. We say $f$ maps $X$ \emph{$\dl$-surjectively\/} onto $L\subset Y$ if $d_H(f(X),L) < \dl$.

\begin{prop}\label{prop:H}
For metric spaces $X$, $Y$ and mappings $f\colon X\to Y$, $g\colon Y\to X$,
$$
d_H(f(X),Y) \le d_{(f,g)}^+.
$$
Thus, if $d_{(f,g)}^+ < \dl$, then $f$ is $\dl$-surjective.
\end{prop}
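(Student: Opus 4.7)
The plan is short: everything comes from specializing the supremum in the definition of $d_{(f,g)}^+$ to the pair $(g(y), y)$ for each $y\in Y$.

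First I would recall that $d_H(f(X),Y)$ requires only verifying $Y\subset U_\e(f(X))$ for every $\e>d_{(f,g)}^+$, since the inclusion $f(X)\subset Y\subset U_\e(Y)$ is automatic. So the content of the proposition is to produce, for each $y\in Y$, a point of $f(X)$ within distance $d_{(f,g)}^+$ of $y$.

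Next, the natural candidate is $f(g(y))\in f(X)$. Substituting $x=g(y)$ into the defining supremum
\[
d_{(f,g)}^+ = \sup\bigl\{|f(x)\,y'| - |x\,g(y')| : x\in X,\ y'\in Y\bigr\}
\]
(with $y'=y$) yields $|f(g(y))\,y| - |g(y)\,g(y)| \le d_{(f,g)}^+$, i.e.\ $|f(g(y))\,y|\le d_{(f,g)}^+$. Hence for every $\e>d_{(f,g)}^+$ we have $y\in U_\e(f(X))$, so $d_H(f(X),Y)\le d_{(f,g)}^+$.

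The second assertion is immediate: if $d_{(f,g)}^+<\dl$, then $d_H(f(X),Y)\le d_{(f,g)}^+<\dl$, which is exactly the definition of $\dl$-surjectivity. There is no genuine obstacle here; the only point to be careful about is the passage from the bound $|f(g(y))\,y|\le d_{(f,g)}^+$ to the strict inequality defining the $\e$-neighborhood, which is handled simply by taking $\e$ arbitrarily close to (and greater than) $d_{(f,g)}^+$.
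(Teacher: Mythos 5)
Your proposal is correct and follows essentially the same route as the paper: substituting $x=g(y)$ into the bound defining $d_{(f,g)}^+$ to get $|f(g(y))\,y|\le d_{(f,g)}^+$, hence $d_H(f(X),Y)\le d_{(f,g)}^+$, with the $\dl$-surjectivity claim then immediate. The extra care you take with the $\e$-neighborhood is fine but not needed beyond what the paper already does.
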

\begin{proof}
For any $y\in Y$, set $x = g(y)$. Then (\ref{eq:d4}) gives
$$
|f(x)y| \le |f(g(y)) y| = |f(g(y)) y| - |g(y)g(y)| \le d_{(f,g)}^+.
$$
Hence $d_H(f(X),Y) \le d_{(f,g)}^+$.
\end{proof}

\begin{prop}\label{prop:Hg}
For a $\dl$-surjective mapping $f\colon X\to Y$ between metric spaces, there exists $g\colon Y\to X$ such that
$$
d_g^+ \le d_f^- + 2\dl, \quad d_g^- \le d_f^+ + 2\dl, \quad d_{(f,g)}^+ \le d_f^+ + \dl, \quad d_{(f,g)}^- \le d_f^- + \dl.
$$
Consequently,
$$
d_{GH}(X,Y) \le d_{GH}(f,g) \le \frac{1}{2} \dis f + \dl.
$$
\end{prop}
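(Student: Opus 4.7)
The plan is to construct $g$ as a ``near right-inverse'' of $f$. Since $d_H(f(X),Y)<\dl$, for every $y\in Y$ there exists some $x\in X$ with $|f(x)\,y|<\dl$; I would choose one such $x$ for each $y$ (invoking the axiom of choice) and set $g(y)=x$. This guarantees the single inequality $|f(g(y))\,y|<\dl$ for all $y\in Y$, which drives every estimate below.

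With $g$ so defined, the four inequalities reduce to routine triangle-inequality bookkeeping using the two-sided bounds $(\ref{eq:d2})$ for $f$. For the co-distortion bounds $d_{(f,g)}^\pm\le d_f^\pm+\dl$, I would insert $f(g(y))$ between $f(x)$ and $y$: the difference $|f(x)\,f(g(y))|-|x\,g(y)|$ is controlled by $d_f^\pm$ via $(\ref{eq:d2})$, while $|f(g(y))\,y|<\dl$ contributes a single $\dl$. For the distortion bounds $d_g^+\le d_f^-+2\dl$ and $d_g^-\le d_f^++2\dl$, I would instead insert $f(g(y))$ and $f(g(y'))$ at \emph{both} endpoints, obtaining $\bigl||yy'|-|f(g(y))\,f(g(y'))|\bigr|<2\dl$, and then compare $|f(g(y))\,f(g(y'))|$ with $|g(y)\,g(y')|$ via $(\ref{eq:d2})$. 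One must track the sign-flip here: a shrinkage of $f$ (controlled by $d_f^-$) corresponds to a possible expansion of $g$ (controlled by $d_g^+$), and conversely, which is why the $+$ and $-$ indices exchange in these two inequalities.

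The concluding estimate then follows from the formula
$$
d_{GH}(f,g)=\frac{1}{2}\max\{d_f^-,d_f^+,d_g^-,d_g^+,d_{(f,g)}^+,d_{(f,g)}^-\}
$$
recorded in Section~\ref{sec:pmcms}: each of the six quantities on the right is bounded by $\dis f+2\dl$, yielding $d_{GH}(f,g)\le\frac{1}{2}\dis f+\dl$; and $d_{GH}(X,Y)\le d_{GH}(f,g)$ holds by the definition of the Gromov--Hausdorff distance as an infimum.

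There is no real obstacle in this argument --- it is essentially bookkeeping. The only delicate point is the sign-flip just mentioned, which also explains why the co-distortion bounds pick up only a single $\dl$ while the distortion bounds for $g$ pick up $2\dl$: one inserts $f(g(\cdot))$ at a single endpoint in the former case and at both endpoints in the latter.
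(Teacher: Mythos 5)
Your proposal is correct and follows essentially the same route as the paper: the paper also builds $g$ as a near right-inverse (taking an exact preimage on $f(X)$ and the preimage of a nearby image point otherwise, which likewise gives $|f(g(y))\,y|<\dl$ for all $y$) and then runs the same triangle-inequality estimates and the same $\max$-formula for $d_{GH}(f,g)$. Your one-step choice of $g(y)$ is a harmless simplification of the paper's construction, and your sign-flip bookkeeping for $d_g^{\pm}$ matches the paper's.
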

\begin{proof}
Define $g$ on $f(X)$ by $g(f(x)) = x$. For $y \notin f(X)$, choose $y'\in f(X)$ with $|yy'| < \dl$ and set $g(y) = g(y')$.

Then
$$
|g(y_1)g(y_2)| = |g(y_1')g(y_2')| = |x_1' x_2'| \le |f(x_1')f(x_2')| + d_f^- = |y_1' y_2'| + d_f^- < |y_1 y_2| + 2\dl + d_f^-,
$$
and similarly
$$
|g(y_1)g(y_2)| = |x_1' x_2'| \ge |f(x_1')f(x_2')| - d_f^+ > |y_1 y_2| - 2\dl - d_f^+.
$$
Also,
$$
|f(x)y| < |f(x)y'| + \dl = |f(x)f(x')| + \dl \le |xx'| + d_f^+ + \dl = |x g(y')| + d_f^+ + \dl,
$$
and analogously
$$
|f(x)y| > |f(x)y'| - \dl = |f(x)f(x')| - \dl \ge |xx'| - d_f^- - \dl = |x g(y')| - d_f^- - \dl.
$$
\end{proof}

\begin{prop}\label{prop:Hg2}
If $d_{GH}(X,Y) < d$, then there exists a $d$-surjective $(2d)$-isometry $f\colon X\to Y$.
\end{prop}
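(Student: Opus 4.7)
The plan is to read the desired $f$ off directly from the formula $d_{GH}(X,Y) = \inf\{d_{GH}(f,g) : f\in Y^X,\ g\in X^Y\}$ established in Section~\ref{sec:pmcms}. Since $d_{GH}(X,Y) < d$, I would pick a pair $(f,g)$ with $d_{GH}(f,g) < d$; expanding $d_{GH}(f,g) = \frac{1}{2}\max\{\dis f,\codis(f,g),\dis g\}$ gives $\dis f < 2d$ and $\codis(f,g) < 2d$ simultaneously, for this particular $f$ and its partner $g$.

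The first inequality $\dis f < 2d$ is exactly the $(2d)$-isometry clause of the conclusion. For the Hausdorff-surjectivity clause the same pair satisfies $d_{(f,g)}^+ \le \codis(f,g) < 2d$, and Proposition~\ref{prop:H} then gives $d_H(f(X),Y) \le d_{(f,g)}^+ < 2d$. To sharpen this bound down to the claimed $d$, I would pass to the embedding formulation of $d_{GH}$, realising $d_{GH}(X,Y) < d$ by a metric $\rho$ on $Z = X \sqcup Y$ extending both given metrics with $d_H^\rho(X,Y) < d$. Then I would redefine $f$ by choosing, for each $x\in X$, a point $f(x)\in Y$ with $\rho(x,f(x)) < d$, and additionally prescribe $f$ on a selected set of preimages of a maximal $d$-separated net $B\subset Y$ so as to force $B\subset f(X)$.

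The $(2d)$-isometry estimate falls out by two applications of the triangle inequality in $\rho$: $|f(x)f(x')| \le \rho(f(x),x) + \rho(x,x') + \rho(x',f(x')) < |xx'| + 2d$, and symmetrically $|xx'| < |f(x)f(x')| + 2d$. The main obstacle is the selection step: for the prescription $f(x_b) = b$ with $b\in B$ to be well defined the preimages $x_b\in X$ with $\rho(x_b,b) < d$ must be chosen distinct, which is a Hall-type matching condition on the bipartite relation $\{(b,x) : \rho(b,x) < d\}\subset B\times X$. Once an injection $B\hookrightarrow X$, $b\mapsto x_b$, is produced (here a $d' < d$ with $d_{GH}(X,Y) < d'$ may need to be fixed first to obtain some slack), the inclusion $B\subset f(X)$ combined with the $d$-net property of $B$ in $Y$ yields $d_H(f(X),Y)\le d_H(B,Y) < d$, and the two clauses of the proposition read off at once.
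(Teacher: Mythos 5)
Your first paragraph is precisely the paper's own argument: take $(f,g)$ with $d_{GH}(f,g)<d$, read off $\dis f<2d$, and feed $d_{(f,g)}^+\le\codis(f,g)\le 2\,d_{GH}(f,g)<2d$ into Proposition~\ref{prop:H}. But note what this honestly yields: $d_H(f(X),Y)<2d$, i.e.\ $2d$-surjectivity, not the $d$-surjectivity in the statement. (The paper's proof asserts at this point that $f$ is $d$-surjective, which is the same factor-of-two slip you detected; Proposition~\ref{prop:H} would need $d_{(f,g)}^+<d$ for that.)

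Your attempt to sharpen to $d$-surjectivity cannot be completed: the selection step you yourself flag as ``the main obstacle'' genuinely fails, and not merely for technical reasons about infinite Hall-type matchings (where Hall's condition is anyway insufficient without local finiteness). There is a cardinality obstruction: let $X$ be a one-point space and $Y$ a two-point space of diameter $r$, and take $d$ with $r/2<d<r$. Then $d_{GH}(X,Y)=r/2<d$, a maximal $d$-separated set is $B=Y$ with $|B|=2>|X|$, so no injection $B\hookrightarrow X$ exists; and indeed \emph{every} $f\colon X\to Y$ has $d_H(f(X),Y)=r>d$, so no $d$-surjective map exists at all. Hence the clause ``$d$-surjective'' is false as stated and no argument can rescue it; the correct conclusion available from your route (and from the paper's) is a $2d$-surjective $(2d)$-isometry, equivalently a $d$-surjective $d$-isometry under the hypothesis $d_{GH}(X,Y)<d/2$. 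This weaker form is all that is needed for the only application, the finiteness equivalence of Corollary~\ref{cor:H}, so the right move is to correct the statement rather than to pursue the net-plus-matching construction.
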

\begin{proof}
There exist $f\colon X\to Y$ and $g\colon Y\to X$ with $d_{GH}(f,g) < d$. By Proposition~\ref{prop:H}, $f$ is $d$-surjective. Since $\frac{1}{2} \dis f \le d_{GH}(f,g) < d$, $f$ is a $(2d)$-isometry.
\end{proof}

From Propositions~\ref{prop:Hg} and~\ref{prop:Hg2} we obtain:
\begin{cor}\label{cor:H}
For metric spaces $X$ and $Y$, the following are equivalent:
\begin{enumerate}
\item $d_{GH}(X,Y) < \infty$;
\item there exist finite $d,\dl$ and a $\dl$-surjective $d$-isometry $f\colon X\to Y$;
\item there exist finite $d',\dl'$ and a $\dl'$-surjective $d'$-isometry $g\colon Y\to X$.
\end{enumerate}
\end{cor}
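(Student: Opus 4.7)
The corollary is essentially a packaging of the two preceding propositions, so the plan is to verify the cycle (1)$\Rightarrow$(2)$\Rightarrow$(1) and then invoke the symmetry $d_{GH}(X,Y) = d_{GH}(Y,X)$ to pick up (1)$\Leftrightarrow$(3) for free.

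For (1)$\Rightarrow$(2), I would fix any finite $d$ with $d_{GH}(X,Y) < d$ and apply Proposition~\ref{prop:Hg2} directly: it produces a $d$-surjective $(2d)$-isometry $f\colon X\to Y$, which is exactly a witness for (2) with the finite constants $\dl := d$ and distortion bound $2d$. Running the same argument with the roles of $X$ and $Y$ interchanged (permissible because $d_{GH}$ is symmetric) yields (1)$\Rightarrow$(3).

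For (2)$\Rightarrow$(1), I would start from a $\dl$-surjective $d$-isometry $f\colon X\to Y$ with $d,\dl$ finite, so in particular $\dis f \le d$. Plugging $f$ into Proposition~\ref{prop:Hg} produces a companion $g\colon Y\to X$ with
\[
d_{GH}(X,Y) \;\le\; d_{GH}(f,g) \;\le\; \tfrac{1}{2}\dis f + \dl \;\le\; \tfrac{d}{2} + \dl \;<\; \infty,
\]
which is (1). The implication (3)$\Rightarrow$(1) follows by the same argument after exchanging $X$ and $Y$.

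There is no real obstacle here: the substantive content is carried entirely by Propositions~\ref{prop:Hg} and~\ref{prop:Hg2}, and all that remains is to observe that the bounds they furnish are finite whenever their inputs are. The only stylistic choice is whether to write the proof as a $1\Rightarrow 2\Rightarrow 1$ cycle plus symmetry, or to spell out all four implications separately; the former seems cleaner.
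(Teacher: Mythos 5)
Your proposal is correct and matches the paper's intent exactly: the paper derives Corollary~\ref{cor:H} directly from Propositions~\ref{prop:Hg} and~\ref{prop:Hg2}, with the symmetry of $d_{GH}$ handling the third condition, which is precisely your argument.
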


\begin{thm}\label{thm:sur}
Let $X$ and $Y$ be cardinality homogeneous metric spaces of the same cardinality. If there exists a $\dl$-surjective $d$-isometry $f\colon X\to Y$, then there exists a bijective $(d+2\dl)$-isometry $\tilde{f}\colon X\to Y$.
\end{thm}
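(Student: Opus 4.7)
The plan is to reduce the problem to the framework of Theorem~\ref{thm:1-1exist} by first producing a partner mapping $g\colon Y\to X$ via Proposition~\ref{prop:Hg}, and then extracting a bijection through the quantitative refinement in Proposition~\ref{prop:dis}.

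Since $f$ is $\delta$-surjective we have $\delta_0 := d_H(f(X),Y) < \delta$. Fix any $\delta''$ with $\delta_0 < \delta'' < \delta$; then $f$ is also $\delta''$-surjective. Applying Proposition~\ref{prop:Hg} with parameter $\delta''$ and using $d_f^+,\, d_f^- \le \dis f \le d$ produces $g\colon Y\to X$ satisfying
$$
d_g^+,\, d_g^- \le d + 2\delta'', \qquad d_{(f,g)}^+,\, d_{(f,g)}^- \le d + \delta''.
$$
In particular the maximum of all six one-sided quantities $d_f^\pm$, $d_g^\pm$, $d_{(f,g)}^\pm$ is at most $d + 2\delta''$.

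Set $r := (\delta - \delta'')/2 > 0$. By cardinality homogeneity every nonempty open ball of radius $r$ in $X$ or $Y$ has cardinality $|X|=|Y|$, so the covers $\mathcal{L}_X = \{U_r(x) : x\in X\}$ and $\mathcal{L}_Y = \{U_r(y) : y\in Y\}$ verify the hypothesis of Theorem~\ref{thm:1-1exist}; moreover every element of either cover has diameter at most $2r$, so we may take $\e_X = \e_Y = 2r$ in Proposition~\ref{prop:dis}. The resulting bijection $\tilde f\colon X\to Y$ therefore satisfies
$$
\dis \tilde f \;\le\; \max\{d_f^\pm,\, d_g^\pm,\, d_{(f,g)}^\pm\} + 2\e \;\le\; (d + 2\delta'') + 4r \;=\; d + 2\delta,
$$
which is exactly the required $(d+2\delta)$-isometry bound.

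The one delicate point is matching the constants exactly. Proposition~\ref{prop:dis} introduces an unavoidable additive error $2\e$ proportional to the cover diameter, so a naive application would only yield a $(d+2\delta+\eta)$-isometry for each $\eta > 0$ separately, without producing a single $\tilde f$ that attains the sharp bound. The key observation is that the \emph{strict} inequality $d_H(f(X),Y) < \delta$ in the definition of $\delta$-surjectivity furnishes a genuine gap $\delta - \delta_0 > 0$; balancing $\delta'' + 2r = \delta$ allocates precisely this slack to absorb the cover error and delivers the sharp conclusion with a single bijection.
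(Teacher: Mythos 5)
Your proof is correct and follows essentially the same route as the paper: both first invoke Proposition~\ref{prop:Hg} to produce the partner map $g$, and then extract the bijection via the Theorem~\ref{thm:1-1exist}/Proposition~\ref{prop:dis} machinery — the only difference is that the paper routes this second step through Theorem~\ref{cor:dis} (whose proof is exactly your cover-by-small-balls argument), quoting $d_{GH}(X,Y)<\tfrac12 d+\delta$ and the infimum characterization over bijections, whereas you inline that argument with an explicit balance $\delta''+2r=\delta$ of the slack $\delta-d_H(f(X),Y)>0$. Your bookkeeping of the constants is accurate, so no gap.
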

\begin{proof}
By Proposition~\ref{prop:Hg}, there exists $g\colon Y\to X$ with $d_{GH}(X,Y) \le \frac{1}{2} \dis f + \dl < \frac{1}{2} d + \dl$. Theorem~\ref{cor:dis} yields a bijection $\tilde{f}\colon X\to Y$ with $d_{GH}(X,Y) \le \frac{1}{2} \dis \tilde{f} < \frac{1}{2} d + \dl$. Thus $\dis \tilde{f} < d + 2\dl$.
\end{proof}

\begin{prop}\label{prop:Hhat}
If $f\colon X\to Y$ maps $X$ $\dl$-surjectively onto a subset $L\subset Y$ with $\dl > 0$, then there exists $\hat{f}\colon X\to L$ with $|\hat{f}(x) f(x)| < \dl$ for all $x\in X$. Hence
$$
d_H(\hat{f}(X),L) \le 2\dl, \quad d_{\hat{f}}^+ \le d_f^+ + 2\dl, \quad d_{\hat{f}}^- \le d_f^- + 2\dl,
$$
so $\hat{f}$ is $(2\dl)$-surjective onto $L$.
\end{prop}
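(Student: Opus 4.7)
The plan is a direct selection argument followed by three applications of the triangle inequality. Since the hypothesis $d_H(f(X),L)<\dl$ is strict, I fix some $\dl_0<\dl$ with $d_H(f(X),L)\le \dl_0$. For every $x\in X$ the point $f(x)$ lies within distance $\dl_0$ of $L$, so the set $\{\ell\in L:|\ell f(x)|\le \dl_0\}$ is nonempty, and I choose one such element by the axiom of choice and call it $\hf(x)$. This produces a mapping $\hf\colon X\to L$ satisfying $|\hf(x)f(x)|\le \dl_0<\dl$ uniformly in $x$.

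For the two-sided distortion bounds, the triangle inequality gives
$$
\bigl||\hf(x)\hf(x')|-|f(x)f(x')|\bigr|\le |\hf(x)f(x)|+|\hf(x')f(x')|\le 2\dl_0
$$
for all $x,x'\in X$. Subtracting $|xx'|$ from both sides of the resulting upper bound $|\hf(x)\hf(x')|\le|f(x)f(x')|+2\dl_0$ and taking the supremum in $x,x'$ yields $d_{\hf}^+\le d_f^++2\dl$; the analogous lower estimate gives $d_{\hf}^-\le d_f^-+2\dl$.

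For the Hausdorff estimate, the inclusion $\hf(X)\ss L$ is immediate by construction, so only $L\ss U_{2\dl}(\hf(X))$ requires verification. Given any $\ell\in L$, since $d_H(f(X),L)\le \dl_0$ there exists $x\in X$ with $|\ell f(x)|\le\dl_0$, and then the triangle inequality yields $|\ell \hf(x)|\le|\ell f(x)|+|f(x)\hf(x)|\le 2\dl_0<2\dl$. Hence $d_H(\hf(X),L)\le 2\dl_0<2\dl$, which is exactly the statement that $\hf$ is $(2\dl)$-surjective onto $L$.

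The proof is essentially the triangle inequality applied to a pointwise choice, so there is no substantive obstacle. The only mild subtlety is the bookkeeping of strict versus non-strict inequalities: exploiting the strictness in the hypothesis to produce the slack $\dl_0<\dl$ is what enables the strict conclusion $d_H(\hf(X),L)<2\dl$ needed for $(2\dl)$-surjectivity, rather than merely the non-strict bound $\le 2\dl$ recorded in the statement.
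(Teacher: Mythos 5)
Your construction is the same as the paper's: pick for each $x$ a point of $L$ close to $f(x)$ and then run the triangle inequality; the paper's proof is exactly this one-line choice. However, as written your selection step has a small but real flaw: from $d_H(f(X),L)\le \dl_0$ you conclude that the set $\{\ell\in L:|\ell f(x)|\le \dl_0\}$ is nonempty, and later that for each $\ell\in L$ there is $x$ with $|\ell f(x)|\le\dl_0$. Both claims assume the infimum of the distance to a set is attained, which can fail (e.g.\ $L=(0,1)\subset\R$, $f(X)=\{-\dl_0\}\cup(0,1)$, $\dl=2\dl_0$: then $d_H(f(X),L)=\dl_0$ but no point of $L$ is within $\dl_0$ of $-\dl_0$). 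The repair is immediate and in the spirit of your own remark about strictness: fix $\dl_0<\dl_1<\dl$ and choose $\hf(x)\in L$ with $|\hf(x)f(x)|<\dl_1$ (possible since $d\bigl(f(x),L\bigr)\le\dl_0<\dl_1$), and likewise use points of $f(X)$ within $\dl_1$ of a given $\ell\in L$; all your estimates then go through with $2\dl_1<2\dl$, giving the stated bounds and the strict inequality needed for $(2\dl)$-surjectivity. The paper avoids the issue by simply choosing $\hf(x)$ with $|\hf(x)f(x)|<\dl$, i.e.\ with the open rather than the closed bound. With that one-line correction your argument is complete and coincides with the paper's.
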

\begin{proof}
For each $x\in X$, choose $\hat{f}(x) \in L$ with $| \hat{f}(x) f(x) | < \dl$.
\end{proof}

\begin{prop}\label{prop:sur2}
If $f\colon X\to Y$ is a $d$-isometry that maps $X$ $\dl$-surjectively onto $L\subset Y$ ($\dl,d>0$), then there exists a $(2\dl)$-surjective $(d+2\dl)$-isometry $\hat{f}\colon X\to L$.
\end{prop}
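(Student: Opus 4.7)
The plan is to invoke Proposition~\ref{prop:Hhat} directly, applied to $f$ regarded as a mapping that sends $X$ $\dl$-surjectively onto $L$. That proposition immediately furnishes a mapping $\hat f\colon X\to L$ together with three pieces of data: (i) the pointwise approximation $|\hat f(x)f(x)|<\dl$ for every $x\in X$, (ii) the Hausdorff bound $d_H(\hat f(X),L)\le 2\dl$, which is by definition exactly the $(2\dl)$-surjectivity of $\hat f$ onto $L$, and (iii) the one-sided distortion estimates $d_{\hat f}^+\le d_f^++2\dl$ and $d_{\hat f}^-\le d_f^-+2\dl$.

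From (ii) half of the conclusion is already in hand. To obtain the remaining half, one converts (iii) into a bound on the full distortion, using the hypothesis $\dis f\le d$:
\[
\dis\hat f=\max\{d_{\hat f}^+,d_{\hat f}^-\}\le\max\{d_f^+,d_f^-\}+2\dl=\dis f+2\dl\le d+2\dl,
\]
so $\hat f$ is a $(d+2\dl)$-isometry, completing the proof.

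I do not expect any genuine obstacle. The substantive work, namely the pointwise perturbation choosing a point of $L$ within $\dl$ of each $f(x)$ together with the triangle-inequality verification of the Hausdorff and one-sided distortion estimates, has already been done inside Proposition~\ref{prop:Hhat}. Proposition~\ref{prop:sur2} is therefore a one-step corollary: it merely repackages the three conclusions of~\ref{prop:Hhat} into the combined terminology of a ``$(2\dl)$-surjective $(d+2\dl)$-isometry''. The only writing to do is the display above.
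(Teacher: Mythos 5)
Your proof is correct and follows exactly the paper's route: invoke Proposition~\ref{prop:Hhat} to get $\hat f$ with $|\hat f(x)f(x)|<\dl$ and the $(2\dl)$-surjectivity, then pass from the one-sided estimates $d_{\hat f}^{\pm}\le d_f^{\pm}+2\dl$ to $\dis\hat f\le d+2\dl$. Your write-up just makes explicit the distortion bookkeeping that the paper compresses into ``Thus $\hat f$ is a $(d+2\dl)$-isometry.''
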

\begin{proof}
By Proposition~\ref{prop:Hhat}, there exists $\hat{f}\colon X\to L$ that is $(2\dl)$-surjective onto $L$ and satisfies $|\hat{f}(x) f(x)| < \dl$ for all $x\in X$. Thus $\hat{f}$ is a $(d+2\dl)$-isometry.
\end{proof}

\begin{thm}\label{thm:sur2}
If $f\colon X\to Y$ is a $d$-isometry mapping $X$ $\dl$-surjectively onto $L\subset Y$ ($\dl,d>0$) and both $X$ and $L$ are cardinality homogeneous with $|X|=|L|$, then there exists a bijective $(d+6\dl)$-isometry $\tilde{f}\colon X\to L$.
\end{thm}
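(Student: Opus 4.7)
The plan is to reduce Theorem~\ref{thm:sur2} to Theorem~\ref{thm:sur} by first replacing $f$ with a mapping whose codomain is exactly $L$. This two-step strategy is natural because Theorem~\ref{thm:sur} requires the target space itself to be cardinality homogeneous and of the same cardinality as $X$, and here we are given cardinality homogeneity only for $L$ (not all of $Y$), so we must work inside $L$ rather than inside $Y$.

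First I would apply Proposition~\ref{prop:sur2} to the given $f\colon X\to Y$. This produces a mapping $\hat{f}\colon X\to L$ which is a $(d+2\dl)$-isometry and is $(2\dl)$-surjective onto $L$. At this point the ambient space $Y$ drops out of the picture entirely: $\hat{f}$ goes between $X$ and $L$, both of which are cardinality homogeneous metric spaces with $|X|=|L|$.

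Next I would apply Theorem~\ref{thm:sur} with $X$ and $L$ playing the roles of the two cardinality homogeneous spaces, with the distortion parameter $d' = d+2\dl$ and the surjectivity parameter $\dl' = 2\dl$. The conclusion of Theorem~\ref{thm:sur} yields a bijective $(d'+2\dl')$-isometry $\tilde{f}\colon X\to L$, and
\[
d' + 2\dl' = (d+2\dl) + 2(2\dl) = d+6\dl,
\]
which is exactly the bound claimed.

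There is no real obstacle here: the theorem is essentially a composition of two previously established results, and the only thing to verify is the bookkeeping of the two constants. The one point worth noting is that Proposition~\ref{prop:sur2} is precisely what lets us pass from ``$\dl$-surjective onto $L\subset Y$'' (the hypothesis of the Dilworth--Tabor setting) to ``a map into $L$ that is $(2\dl)$-surjective onto $L$'', which is the form needed for Theorem~\ref{thm:sur}. This is the step that absorbs the asymmetry between the two theorems.
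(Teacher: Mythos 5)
Your proposal is correct and is exactly the paper's argument: apply Proposition~\ref{prop:sur2} to obtain a $(2\dl)$-surjective $(d+2\dl)$-isometry $\hat{f}\colon X\to L$, then apply Theorem~\ref{thm:sur} to $\hat{f}$ with parameters $d'=d+2\dl$ and $\dl'=2\dl$, giving the bound $d'+2\dl'=d+6\dl$. The constant bookkeeping matches the paper's proof precisely.
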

\begin{proof}
Proposition~\ref{prop:sur2} yields a $(2\dl)$-surjective $(d+2\dl)$-isometry $\hat{f}\colon X\to L$. Theorem~\ref{thm:sur} then gives a bijective $(d+6\dl)$-isometry $\tilde{f}\colon X\to L$.
\end{proof}

\section{Application to Banach Spaces}
\begin{prop}\label{prop:tvs}
Every topological vector space is cardinality homogeneous.
\end{prop}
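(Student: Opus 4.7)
The plan is to reduce to neighborhoods of the origin and then exploit the fact that such neighborhoods are absorbing. Let $V$ be a topological vector space over the scalar field $\mathbb{K}$ (which we take to be $\mathbb{R}$ or $\mathbb{C}$ with its usual topology), and let $U\subset V$ be a nonempty open subset. Picking any $u_0\in U$, translation by $-u_0$ is a homeomorphism of $V$, so $W:=U-u_0$ is an open neighborhood of $0$ with $|W|=|U|$. It therefore suffices to prove $|W|=|V|$ for every open neighborhood $W$ of $0$.

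Next, I would invoke the standard fact that every neighborhood of $0$ in a TVS is absorbing: for each $v\in V$, the map $\mathbb{K}\to V$, $t\mapsto tv$, is continuous and sends $0$ to $0\in W$, so its preimage of $W$ is an open neighborhood of $0$ in $\mathbb{K}$. In particular, for every $v\in V$ there exists $n\in\mathbb{N}$ with $v\in nW$, and hence $V=\bigcup_{n\ge 1} nW$. Since multiplication by $n\ne 0$ is a bijection of $V$ onto itself, $|nW|=|W|$, yielding $|V|\le \aleph_0\cdot |W|$.

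Finally, I need $|W|\ge \aleph_0$ to conclude that $\aleph_0\cdot|W|=|W|$. If $V=\{0\}$ the statement is trivial, so assume there exists $v\in V$ with $v\ne 0$. Then $t\mapsto tv$ is an injection from $\mathbb{K}$ into $V$, and the preimage of $W$ under this map is an open neighborhood of $0$ in $\mathbb{K}$, which is uncountable. Hence $W$ contains uncountably many points, and combining all three steps gives $|V|\le|W|\le|V|$, whence $|U|=|W|=|V|$.

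The only mildly delicate part is the last step: one must exclude the degenerate situation in which $W$ could be finite, and this is precisely where both the non-triviality of $V$ and the fact that the scalar field is infinite (and carries a nondiscrete topology making scalar multiplication continuous) are used. Apart from this, the argument is just a combination of absorption with elementary cardinal arithmetic.
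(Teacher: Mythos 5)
Your proof is correct and follows essentially the same route as the paper's: translate the open set to a neighborhood of the origin and use that it is absorbing, so $X=\bigcup_{n\ge1}nW$ gives equicardinality. The only difference is that you spell out the cardinal-arithmetic step ($|W|$ infinite in the nontrivial case, hence $\aleph_0\cdot|W|=|W|$), which the paper leaves implicit.
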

\begin{proof}
Let $U$ be a nonempty open subset of a topological vector space $X$ and let $x\in U$. Then $U$ and $V = U - x = \{y - x : y\in U\}$ are equicardinal. Since $X = \bigcup_{n=1}^\infty nV$, $V$ and $X$ are equicardinal.
\end{proof}

\begin{prop}\label{prop:bshp}
If there exists a $\dl$-surjective $d$-isometry between Banach spaces $X$ and $Y$ \(for finite $\dl$ and $d$\), then $|X| = |Y|$.
\end{prop}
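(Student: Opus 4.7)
\emph{Proof proposal.}
The plan is to reduce to the nontrivial case and then prove $d(Y)\le|X|$ (and symmetrically $d(X)\le|Y|$), combining these with the Banach-space identity $|Z|=d(Z)^{\aleph_0}$ to conclude $|X|=|Y|$.

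If either space is trivial, a bounded-diameter argument shows the other is too: if $X=\{0\}$, then $\dl$-surjectivity forces $Y\subset U_\dl(f(0))$, so $Y$ is a bounded Banach space, hence trivial. Henceforth assume both $X$ and $Y$ are nontrivial. For the inequality $d(Y)\le|X|$, pick by Zorn a maximal $3\dl$-separated set $A\subset Y$, which is automatically $3\dl$-dense. Using $\dl$-surjectivity of $f$, define $\phi\colon A\to X$ by choosing $\phi(a)$ with $|f(\phi(a))a|<\dl$. If $\phi(a)=\phi(a')$ with $a\ne a'$, then $|aa'|\le|af(\phi(a))|+|f(\phi(a'))a'|<2\dl$, contradicting $3\dl$-separation, so $\phi$ is injective and $|A|\le|X|$. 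For the reverse bound, $|A|\ge d(Y)$: when $Y$ is non-separable, a transfinite Riesz-type construction produces, inside a sphere of sufficiently large radius, a $d(Y)$-sized family with pairwise distances exceeding $6\dl$, and each such vector must be $3\dl$-covered by a distinct point of $A$; when $Y$ is finite-dimensional, $d(Y)=\aleph_0$ and unboundedness forces $|A|\ge\aleph_0$. Hence $d(Y)\le|X|$.

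Proposition~\ref{prop:Hg} supplies $g\colon Y\to X$ which is a $(d+2\dl)$-isometry, and Proposition~\ref{prop:H} applied to the pair $(g,f)$ shows $g$ is moreover $(d+\dl)$-surjective onto $X$; the mirror argument then yields $d(X)\le|Y|$. Finally, I will invoke the standard Banach-space identity $|Z|=d(Z)^{\aleph_0}$ valid for every nontrivial Banach space $Z$ (the $\le$ direction comes from Cauchy completion of a dense set of size $d(Z)$; the $\ge$ direction from a biorthogonal system of length $d(Z)$). This yields
\[
|Y|=d(Y)^{\aleph_0}\le|X|^{\aleph_0}=\bigl(d(X)^{\aleph_0}\bigr)^{\aleph_0}=d(X)^{\aleph_0}=|X|,
\]
and the symmetric chain shows $|X|\le|Y|$, hence $|X|=|Y|$. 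The main obstacles will be the lower bound $|A|\ge d(Y)$ in the non-separable case (requiring the transfinite Riesz construction inside $Y$) and having at hand the identity $|Z|=d(Z)^{\aleph_0}$ for arbitrary Banach spaces.
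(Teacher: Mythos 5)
Your route is genuinely different from the paper's. The paper proves $d(X)=d(Y)$ directly: it sends a dense set $Z\subset X$ through $f$, observes $f(Z)$ is a $(d+\dl)$-net of $Y$, and uses the scaling trick $S=\bigcup_{n}\frac1n f(Z)$ to get a dense subset of $Y$ of size $\le|Z|$; it then invokes Toru\'nczyk's theorem to conclude that $X$ and $Y$ are homeomorphic, hence equicardinal. You instead prove the weaker inequalities $d(Y)\le|X|$ and $d(X)\le|Y|$ (via a maximal $3\dl$-separated set and an injection into $X$, which is fine, as is your use of Proposition~\ref{prop:Hg} and Proposition~\ref{prop:H} for the mirror direction --- in fact the $g$ built in Proposition~\ref{prop:Hg} satisfies $g\circ f=\mathrm{id}_X$ and is genuinely surjective), and you close the argument with the cardinal identity $|Z|=d(Z)^{\aleph_0}$. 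This trades the deep topological input (Toru\'nczyk) for elementary cardinal arithmetic plus a transfinite Riesz construction, and it sidesteps the small wrinkle in the paper's appeal to homeomorphy when one space is finite-dimensional and the other is separable infinite-dimensional. Two cosmetic points: your case split ``non-separable / finite-dimensional'' omits the separable infinite-dimensional case, but your unboundedness argument covers every nontrivial separable space; and you could have obtained $|A|\ge d(Y)$ without any Riesz construction by the paper's own scaling trick, since $\bigcup_n\frac1n A$ is dense in $Y$.

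The one genuine weak point is your justification of the inequality $|Z|\ge d(Z)^{\aleph_0}$ ``from a biorthogonal system of length $d(Z)$.'' The existence of a biorthogonal system of cardinality $d(Z)$ in an arbitrary Banach space is not a theorem of ZFC: under CH, Kunen's space $C(K)$ is nonseparable yet has no uncountable biorthogonal system, so this lemma is unavailable in the generality you need. The identity itself is nevertheless true and can be proved with the very tool you already deploy: the transfinite Riesz construction yields a $(1-\e)$-separated family $\{x_\a:\a<d(Z)\}$ in the unit sphere of any infinite-dimensional $Z$, and the map sending a sequence $s\in d(Z)^{\omega}$ to $\sum_{n}4^{-n}x_{s(n)}$ is injective (the first index where two sequences differ dominates the tail), giving $|Z|\ge d(Z)^{\aleph_0}$; the finite-dimensional nontrivial case gives $|Z|=\mathfrak{c}=\aleph_0^{\aleph_0}$ directly. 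With that repair your chain $|Y|=d(Y)^{\aleph_0}\le|X|^{\aleph_0}=d(X)^{\aleph_0}=|X|$ and its mirror image are correct, so the proposal stands once the biorthogonal-system citation is replaced.
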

\begin{proof}
We show $d(X) = d(Y)$. Let $Z\subset X$ be dense with $|Z| = d(X)$. The restriction $\tilde{f} = f|_Z \colon Z\to Y$ is $\e$-surjective with $\e = d + \dl$, so $d_H(f(Z),Y) < \e$ and $f(Z)$ is an $\e$-net in $Y$. Then $\frac{1}{n} f(Z)$ is a $\frac{\e}{n}$-net, and $S = \bigcup_{n=1}^\infty \frac{1}{n} f(Z)$ is dense in $Y$ with $|S| \le |Z|$.

By Corollary~\ref{cor:H}, there exists a $\dl'$-surjective $d'$-isometry $g\colon Y\to X$ (for finite $d'$ and $\dl'$). Symmetrically, $d(X) \le d(Y)$. Thus $d(X) = d(Y)$. By Toru\'nczyk's theorem~\cite{Tor1981}, $X$ and $Y$ are homeomorphic, so $|X| = |Y|$.
\end{proof}

\begin{rk}\label{rk:DilwTabor}
In~\cite[Theorem 1]{T00}, any $\dl$-surjective mapping $f\colon V\to W$ between Banach spaces $V,W$ without low-cardinality dense subsets can be transformed into a surjection $F\colon V\to W$ with $\|f - F\| \le 7\dl$. The proof of Proposition 2 in~\cite{D99} similarly constructs a surjective $F$ with $\|f - F\| \le 2d + \dl$. This reduces the approximation of $\dl$-surjective $d$-isometries to that of $(2d + \dl)$- or $(d + 14\dl)$-isometric surjections.

Below, we construct, from a $d$-isometric $\dl$-surjective $f\colon V\to W$, a mapping $g\colon W\to V$ such that Theorem~\ref{thm:1-1exist} applies to the pair $(f,g)$. This yields a simpler derivation of the Dilworth--Tabor theorem.
\end{rk}

\begin{prop}\label{prop:bsh}
If there exists a $\dl$-surjective $d$-isometry $f\colon X\to Y$ between Banach spaces $X$ and $Y$, then there exists a bijective $(d+2\dl)$-isometry $\tilde{f}\colon X\to Y$.
\end{prop}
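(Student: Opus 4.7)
The plan is to observe that this proposition is an immediate consequence of Theorem~\ref{thm:sur} once the two structural hypotheses of that theorem---cardinality homogeneity of the two spaces and equality of their cardinalities---are verified for Banach spaces. All the work has essentially been done in the preceding propositions, so the proof should consist of stitching them together.

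First, I would invoke Proposition~\ref{prop:tvs}: since $X$ and $Y$ are Banach spaces, in particular topological vector spaces, they are cardinality homogeneous. Second, I would apply Proposition~\ref{prop:bshp}: the hypothesized $\dl$-surjective $d$-isometry $f\colon X\to Y$ forces $|X|=|Y|$. With these two facts, the pair $(X,Y)$ satisfies exactly the hypotheses of Theorem~\ref{thm:sur}.

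Finally, Theorem~\ref{thm:sur} applied to the given $\dl$-surjective $d$-isometry $f$ produces a bijective $(d+2\dl)$-isometry $\tif\colon X\to Y$, which is precisely the conclusion. There is no substantive obstacle: the heavy lifting (the transfinite construction in Theorem~\ref{thm:1-1exist}, the metric estimates in Proposition~\ref{prop:dis}, the reduction to bijections via Theorem~\ref{cor:dis}, and the passage from $\dl$-surjectivity to a companion mapping $g$ via Proposition~\ref{prop:Hg}) is already absorbed into Theorem~\ref{thm:sur}, while the Banach-space-specific content (cardinality homogeneity and the cardinality coincidence $|X|=|Y|$ via density arguments and Toru\'nczyk's theorem) is absorbed into Propositions~\ref{prop:tvs} and~\ref{prop:bshp}. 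The resulting proof is therefore just a one-line citation assembly.
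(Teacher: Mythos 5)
Your proposal matches the paper's proof exactly: cardinality homogeneity via Proposition~\ref{prop:tvs}, equicardinality via Proposition~\ref{prop:bshp}, and then Theorem~\ref{thm:sur} applied to the given $\dl$-surjective $d$-isometry. The argument is correct and complete.
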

\begin{proof}
Proposition~\ref{prop:tvs} implies that $X$ and $Y$ are cardinality homogeneous, and Proposition~\ref{prop:bshp} gives $|X|=|Y|$. The result follows from Theorem~\ref{thm:sur}.
\end{proof}

Combining Proposition~\ref{prop:bsh} with the theorem of M.~Omladi\v{c} and P.~\v{S}emrl~\cite{OS95} yields:
\begin{cor}[Tabor~\cite{T00}]\label{cor:Tabor}
If there exists a $\dl$-surjective $d$-isometry $f\colon X\to Y$ between Banach spaces, then there exists a bijective affine isometry $U\colon X\to Y$ with $\|f - U\| \le 2d + 4\dl$.
\end{cor}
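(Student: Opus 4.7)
The plan is to chain Proposition~\ref{prop:bsh} with the classical Omladi\v{c}--\v{S}emrl theorem.

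First, I would apply Proposition~\ref{prop:bsh} to the given $\dl$-surjective $d$-isometry $f\colon X\to Y$; this produces a bijection $\tilde f\colon X\to Y$ which is a $(d+2\dl)$-isometry. Since $\tilde f$ is bijective, it is in particular surjective between Banach spaces, so the Omladi\v{c}--\v{S}emrl theorem with sharp constant $K=2$ applies to $\tilde f$ directly. It furnishes a surjective isometry $U\colon X\to Y$ satisfying
\[
\|\tilde f - U\| \le 2\,\dis\tilde f \le 2(d+2\dl) = 2d + 4\dl,
\]
and the Mazur--Ulam theorem then automatically makes $U$ affine. This already gives the stated numerical constant as the $2(d+2\dl)$ coming from the bijectivization step; the ``$+2\dl$'' inside is exactly what the reduction in Proposition~\ref{prop:bsh} costs, and the factor of $2$ outside is precisely the Omladi\v{c}--\v{S}emrl sharp constant.

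To convert the bound on $\|\tilde f - U\|$ into the required bound on $\|f - U\|$, I would invoke the quantitative control on $|f(x) - \tilde f(x)|$ supplied by Proposition~\ref{prop:dis} (which lives inside the construction behind Proposition~\ref{prop:bsh}): on $X_I$ the difference is at most $\e_Y$, and on $X_{II}$ it is at most $d_{(f,g)}^+ + \e_X \le d_f^+ + \dl + \e_X$ after invoking Proposition~\ref{prop:Hg}. Since the covers of $X$ and $Y$ by open $\e$-balls may be chosen with $\e$ arbitrarily small, the cover constants collapse in the limit $\e \to 0$, and one combines with the triangle inequality $\|f-U\|\le \|f-\tilde f\| + \|\tilde f - U\|$ to read off the stated bound.

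The main obstacle, in my view, is not the Omladi\v{c}--\v{S}emrl step (which is a genuine black-box citation) but the final arithmetic: one has to be careful that the contribution of $\|f-\tilde f\|$ is absorbed cleanly into $2d+4\dl$ rather than producing a strictly worse constant. All the substantive work has already been invested into Proposition~\ref{prop:bsh} via Theorem~\ref{thm:1-1exist} and Proposition~\ref{prop:dis}; here one just has to unpack the construction of $\tilde f$ enough to see that it sits close to $f$ as $\e\to 0$ and that the Mazur--Ulam--type isometry $U$ inherits this proximity.
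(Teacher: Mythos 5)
Your first two steps coincide with the paper's own derivation: Corollary~\ref{cor:Tabor} is presented there as an immediate combination of Proposition~\ref{prop:bsh} with the Omladi\v{c}--\v{S}emrl theorem, which indeed produces a bijective affine isometry $U$ with $\|\tilde f - U\| \le 2\dis\tilde f \le 2d+4\dl$. The genuine problem is your final step, the passage from $\|\tilde f - U\|$ to $\|f - U\|$. You quote the correct estimate from Proposition~\ref{prop:dis}: on $X_{II}$ one only has $|f(x)\tilde f(x)| \le d_{(f,g)}^+ + \e_X$, which for the $g$ of Proposition~\ref{prop:Hg} is bounded by $d_f^+ + \dl + \e_X$. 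You then assert that ``the cover constants collapse in the limit $\e\to 0$.'' Only $\e_X$ is a cover constant; the summand $d_{(f,g)}^+$ is in general of order $d_f^+$ (for this $g$, taking $y=f(x')$ in its definition already forces $d_{(f,g)}^+\ge d_f^+$ when $f$ is injective) and does not shrink as the balls shrink. Hence the triangle inequality $\|f-U\|\le\|f-\tilde f\|+\|\tilde f-U\|$ only yields $\|f-U\|\le (d+\dl)+(2d+4\dl)=3d+5\dl$, strictly worse than the claimed $2d+4\dl$. A secondary issue is that $\tilde f$, and therefore $U$, depends on $\e$, so ``letting $\e\to0$'' does not by itself produce a single limiting isometry; but this is moot given the first problem.

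To be fair, the paper gives no argument for this step either: the statement of Proposition~\ref{prop:bsh} carries no information about $\|f-\tilde f\|$, so on its face the advertised combination controls only $\|\tilde f-U\|$. You have correctly identified that something extra is needed to conclude for the original $f$, but your resolution does not close the gap. A route that actually delivers a clean bound on $\|f-U\|$ is the \v{S}emrl--V\"ais\"al\"a theorem mentioned in Remark~\ref{rk:SViBT}, which gives $\|f-U\|\le 2d$ directly for $\dl$-surjective $d$-isometries onto the whole space; the bijectivization argument by itself gives $2d+4\dl$ only for the modified map $\tilde f$, or $3d+5\dl$ for $f$ via the triangle inequality.
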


S.\,J.~Dilworth proved his result for mappings whose image is close to a closed subspace rather than the whole space. The same technique applies here.

\begin{prop}\label{prop:bshL}
If $f\colon X\to Y$ is a $d$-isometry between Banach spaces that maps $X$ $\dl$-surjectively onto a closed linear subspace $L\subset Y$, then there exists a bijective $(d+6\dl)$-isometry $\tilde{f}\colon X\to L$.
\end{prop}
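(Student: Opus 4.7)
The plan is to reduce Proposition~\ref{prop:bshL} to Theorem~\ref{thm:sur2} applied to the pair $(X,L)$. The key preliminary observation is that, since $L$ is a closed linear subspace of the Banach space $Y$, the subspace $L$ is itself a Banach space, so every statement in this section that was proved for pairs of Banach spaces applies equally to the pair $(X,L)$.

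First, I would verify that both $X$ and $L$ are cardinality homogeneous: this is immediate from Proposition~\ref{prop:tvs}, since each is a topological vector space.

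Next, I would establish that $|X|=|L|$. To invoke Proposition~\ref{prop:bshp} I need some $\dl'$-surjective $d'$-isometry between the Banach spaces $X$ and $L$, and Proposition~\ref{prop:sur2} manufactures exactly such a map from $f$: a $(2\dl)$-surjective $(d+2\dl)$-isometry $\hat{f}\colon X\to L$. Proposition~\ref{prop:bshp} then delivers $|X|=|L|$.

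With all three hypotheses of Theorem~\ref{thm:sur2} in hand (both spaces cardinality homogeneous, equal cardinality, and $f\colon X\to Y$ a $d$-isometry mapping $X$ $\dl$-surjectively onto $L$), I would apply Theorem~\ref{thm:sur2} directly to obtain the desired bijective $(d+6\dl)$-isometry $\tilde{f}\colon X\to L$. There is no serious obstacle: the two substantive pieces of work (the bijective approximation built from Theorem~\ref{thm:1-1exist} and the Banach-space cardinality equality) have already been packaged into the cited results, and the only thing one must be careful about is recognising that $L$ inherits the Banach and topological-vector-space structure on which those earlier results depend.
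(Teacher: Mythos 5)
Your proposal is correct and follows essentially the same route as the paper: cardinality homogeneity of $X$ and $L$ via Proposition~\ref{prop:tvs}, the auxiliary map $\hat{f}$ from Proposition~\ref{prop:sur2} to feed Proposition~\ref{prop:bshp} and get $|X|=|L|$, and then the bijective $(d+6\dl)$-isometry. The only cosmetic difference is that you invoke Theorem~\ref{thm:sur2} as a packaged statement, while the paper's proof inlines it by applying Proposition~\ref{prop:sur2} and Theorem~\ref{thm:sur} directly; the bound is the same.
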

\begin{proof}
By Proposition~\ref{prop:tvs}, $X$ and $L$ are cardinality homogeneous. Proposition~\ref{prop:sur2} gives a $(2\dl)$-surjective $(d+2\dl)$-isometry $\hat{f}\colon X\to L$. Proposition~\ref{prop:bshp} yields $|X|=|L|$. Apply Theorem~\ref{thm:sur}.
\end{proof}

Combining Proposition~\ref{prop:bshL} with~\cite{OS95} yields:
\begin{cor}[Dilworth~\cite{D99}]\label{cor:bshL}
If $f\colon X\to Y$ is a $d$-isometry between Banach spaces mapping $X$ $\dl$-surjectively onto a closed linear subspace $L\subset Y$, then there exists a bijective affine isometry $U\colon X\to L$ with $\|f - U\| \le 2d + 12\dl$.
\end{cor}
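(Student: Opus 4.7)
The plan is to chain Proposition~\ref{prop:bshL} with the sharp-constant form of the Omladi\v{c}--\v{S}emrl theorem~\cite{OS95}, exactly parallel to the derivation of Corollary~\ref{cor:Tabor} from Proposition~\ref{prop:bsh}.

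First I would apply Proposition~\ref{prop:bshL} to the hypothesised $\dl$-surjective $d$-isometry $f\colon X\to Y$ onto the closed linear subspace $L$, obtaining a bijective $(d+6\dl)$-isometry $\tilde f\colon X\to L$. Since $L$ is closed and linear in the Banach space $Y$, it is itself a Banach space, so $\tilde f$ is in particular a surjective $(d+6\dl)$-isometry between Banach spaces.

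Next I would apply the Omladi\v{c}--\v{S}emrl theorem, with sharp constant $K=2$, to the bijective $(d+6\dl)$-isometry $\tilde f$. This produces a bijective affine isometry $U\colon X\to L$ satisfying
\[
\|\tilde f-U\|\;\le\;2(d+6\dl)\;=\;2d+12\dl.
\]

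Finally, the stated bound $\|f-U\|\le 2d+12\dl$ must be extracted from the estimate on $\tilde f$ just obtained. Here one uses that $\tilde f$ is constructed directly from $f$: Proposition~\ref{prop:Hhat} produces an intermediate $\hat f\colon X\to L$ with $\|f-\hat f\|<\dl$, and the bijectivisation of Theorem~\ref{thm:sur} via Proposition~\ref{prop:dis} gives $\tilde f$ whose pointwise deviation from $\hat f$ is controlled by the parameters $\e_X,\e_Y$, which can be taken arbitrarily small. A triangle-inequality bookkeeping through $f\to\hat f\to\tilde f\to U$ then delivers the desired bound. This final bookkeeping step is the only real obstacle, and it is the same kind of accounting that underlies the analogous Corollary~\ref{cor:Tabor}, where the constant $12\dl$ is replaced by $4\dl$.
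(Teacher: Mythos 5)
Your first two steps are precisely the paper's argument: apply Proposition~\ref{prop:bshL} to get a bijective $(d+6\dl)$-isometry $\tilde f\colon X\to L$, then apply the Omladi\v{c}--\v{S}emrl theorem with $K=2$ to $\tilde f$ (noting $L$ is itself a Banach space) to obtain a bijective affine isometry $U\colon X\to L$ with $\|\tilde f-U\|\le 2(d+6\dl)=2d+12\dl$. The paper's proof consists of exactly this combination and says nothing more.

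The problem is your final bookkeeping step, which as stated would fail. You claim that the pointwise deviation of $\tilde f$ from $\hat f$ is ``controlled by the parameters $\e_X,\e_Y$, which can be taken arbitrarily small.'' Proposition~\ref{prop:dis} gives $|\hat f(x)\tilde f(x)|\le\e_Y$ only for $x\in X_I$; for $x\in X_{II}$ the bound is $d_{(\hat f,g)}^{+}+\e_X$, and with the $g$ supplied by Proposition~\ref{prop:Hg} one has $d_{(\hat f,g)}^{+}\le d_{\hat f}^{+}+2\dl\le d+4\dl$, a quantity that does not shrink with the mesh of the covers. (Nor could it: a bijection cannot in general stay uniformly closer to a non-injective near-isometry than roughly its distortion.) Consequently the triangle inequality through $f\to\hat f\to\tilde f\to U$ yields only something like $\|f-U\|\le(\dl)+(d+4\dl+\e)+(2d+12\dl)\approx 3d+17\dl$, not the claimed $2d+12\dl$. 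So the passage from $\|\tilde f-U\|\le 2d+12\dl$ to $\|f-U\|\le 2d+12\dl$ is a genuine gap in your write-up; be aware that the paper itself leaves this passage implicit (its one-line proof in effect bounds $\|\tilde f-U\|$), so if you want the bound for the original $f$ you need a different argument (e.g.\ the \v{S}emrl--V\"ais\"al\"a refinement mentioned in Remark~\ref{rk:SViBT}), not the $\e_X,\e_Y\to0$ accounting you describe.
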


\begin{rk}\label{rk:SViBT}
The theorem of \v{S}emrl--V\"{a}is\"{a}l\"{a}~\cite[Theorem 3.2]{SV03} improves the bound in Corollary~\ref{cor:bshL} to $\|f - U\| \le 2d + 2\dl$, which is sharp. Combining Corollaries~\ref{cor:H} and~\ref{cor:Tabor} shows that two Banach spaces are isometric if and only if their Gromov--Hausdorff distance is finite~\cite[Theorem 0.3]{BT26.2}.
\end{rk}

\markright{References}

\end{document}